\newcommand{\be}{\begin{eqnarray}}
\newcommand{\e}{\end{eqnarray}}
\newcommand{\bes}{\begin{eqnarray*}}
\newcommand{\es}{\end{eqnarray*}}
\newcommand{\beq}{\begin{equation}}
\newcommand{\eeq}{\end{equation}}
\newcommand{\eps}{\varepsilon}
\newcommand{\pp}{{\boldsymbol\lambda}}
\newcommand{\uu}{{\bf u}}
\newcommand{\vv}{{\bf v}}
\pgfplotsset{compat=newest,
every axis/.append style={axis x line=bottom,
			  scale only axis,
			  },
}
\tikzset{dashdot/.style={dash pattern=on .4pt off 3pt on 4pt off 3pt}}
\newtheorem{theorem}{Theorem}
\date{March 17, 2017}
\begin{document}
\title{Solving optimal control problems governed by random Navier-Stokes equations using low-rank methods} 
\author{Peter Benner\footnotemark[1], Sergey Dolgov\footnotemark[2], Akwum Onwunta\footnotemark[1], Martin Stoll\footnotemark[1]}
\renewcommand{\thefootnote}{\fnsymbol{footnote}}
\footnotetext[2]{University of Bath,
Department of Mathematical Sciences,
Claverton Down,
BA2 7AY Bath, United Kingdom (\texttt{S.Dolgov@bath.ac.uk}). S. Dolgov gratefully acknowledges funding by the EPSRC Fellowship EP/M019004/1.}
\footnotetext[1]{Computational Methods in Systems and Control Theory, Max Planck Institute
for Dynamics of Complex Technical Systems,
Sandtorstr. 1,
39106 Magdeburg,
Germany. \textit{Email addresses:} \texttt{\{benner,onwunta,stollm\}@mpi-magdeburg.mpg.de}}

\renewcommand{\thefootnote}{\arabic{footnote}}

\maketitle

\begin{abstract}
Many  problems in computational science and engineering are simultaneously characterized by
the following  challenging issues:
uncertainty, nonlinearity, nonstationarity and high dimensionality.
Existing numerical  techniques for  such models
would typically require considerable computational and storage resources.
This is the case, for instance,
for an optimization  problem governed by  time-dependent Navier-Stokes equations 
with uncertain inputs. In particular, the stochastic Galerkin finite element
method often leads to a prohibitively high dimensional saddle-point
system with tensor product structure.  In this paper, we approximate the solution by the low-rank Tensor Train decomposition,
and present a numerically efficient algorithm to solve the optimality equations directly in the low-rank representation.
We show that the solution of the vorticity minimization problem with a distributed control admits a representation with ranks 
that depend modestly on model and discretization parameters even for high Reynolds numbers.
For lower Reynolds numbers this is also the case for a boundary control.
This opens the way for a reduced-order modeling of the stochastic optimal flow control with a moderate cost at all stages.
\end{abstract}

{\textit{Keywords: }
Stochastic Galerkin system, iterative methods, PDE-constrained optimization, saddle-point system, tensor train format, low-rank solution,  preconditioning, Schur complement. \\
}

\section{Introduction}
We consider the numerical simulation of
optimization problems constrained by  partial differential equations (PDEs). 
This class of problems can be computationally challenging.
This is particularly so if the constraints are time-dependent PDEs since time-stepping methods quickly reach their
limitations due to the enormous demand for storage \cite{StoB13}.
The computational complexity associated with these problems is further increased when the constraints are nonlinear  \cite{ChoLu2015} 
and contain (up to countably many) parametric or uncertain inputs \cite{BSOS215,BOS15a}.
This is the core problem of this paper.
A viable solution approach to optimization problems with stochastic constraints  employs the spectral stochastic Galerkin 
finite element method (SGFEM). However, this intrusive approach\footnote{Generally speaking, SGFEM techniques are intrusive in the
sense that the codes for the associated deterministic problems
cannot be directly reused. These methods are mainly non-ensemble-based methods and require the solution of discrete systems that couple all spatial and 
probabilistic degrees of freedom.} leads  to the so-called \textit{curse of dimensionality}, in the sense
that it results in prohibitively high dimensional linear systems with tensor product structure \cite{BSOS215,BOS14,EW12}.

It is worth pursuing  computationally efficient ways to simulate optimization problems with stochastic constraints 
using SGFEMs since the Galerkin approximation yields the best approximation
with respect to the energy norm, as well as a favorable framework for error estimation \cite{BPS13}.
In order to cope with the curse of dimensionality we exploit  the underlying mathematical structure of the discretized optimality system.
We develop a low-rank technique based on recent advances in numerical
tensor methods  \cite{hackbusch-2012,bokh-surv-2014} for efficient solution of an optimization  problem governed by nonlinear PDEs
with random coefficients. More specifically, we numerically  simulate an  \underline{O}ptimization  \underline{P}roblem constrained
by  time-dependent \underline{N}avier-\underline{S}tokes equations
with random coefficients\footnote{We remark here that our approach can be easily generalized 
to other nonlinear optimization problems with stochastic constraints as well.} (\underline{OPNS}).
Our aim in this paper is to lift the curse of dimensionality inherent in the
OPNS and allow for efficient simulations of the  model on not much more than an average desktop computer.
Such simulations would enhance
the understanding of the underlying physical model as the computed data can then be used for the quantification of the statistics
of the system response. 

Alternative  approaches to tackle optimization problems with stochastic constraints  include  stochastic collocation schemes \cite{BW09, KHRW13, TKXP12}, 
as well as Monte Carlo methods \cite{AHU16}. These methods are essentially sampling-based and non-intrusive.  
However, for optimization problems, the SGFEM exhibits superior performance compared to the stochastic collocation method \cite{EW12}.
This is due to the fact that, unlike SGFEM,  the non-intrusivity property of the stochastic collocation method
is lost when moments of the state variable appear in the cost functional, or when the control is a
deterministic function. On the other hand, Monte Carlo methods are relatively straightforward to implement.
However, they generally converge rather slowly and do not exploit the  regularity with respect to the parameters that the
solution might have \cite{TKXP12}.

The rest of the paper is organised as  follows. First, we present in Section \ref{probst} the PDE-constrained
optimization that we would like to solve, as well as the necessary mathematical concepts and notation
on which we shall rely in the rest of our discussion. Next, we proceed to Section \ref{discrete_prob} to discuss
the SGFEM discretization of the problem. Section \ref{tt_decom} presents our low-rank iterative
solver and preconditioner which we use to tackle the high-dimensional saddle point systems arising from
the SGFEM discretization of the optimization problem. Finally, in Section \ref{experiments}, we present
numerical experiments to illustrate the performance of the low-rank approach.

\section{Problem statement and mathematical description}
\label{probst}
Let $(\Omega,\mathcal{F},{\mathbb{P}})$ be a complete probability space, where
 $\Omega$ is the set of outcomes, $\mathcal{F}\subseteq 2^{\Omega}$ is the $\sigma$-algebra of events, and $\mathbb{P}:\mathcal{F}\rightarrow [0,1]$ is an appropriate probability measure.
Let $\mathcal{D}\subset \mathbb{R}^d$ with $d\in\{1,2,3\},$ be a bounded spatial domain with a 
 piecewise Lipschitz boundary $\Upsilon.$
Moreover, we consider a finite time interval $[0,T]$.
A random field $z: \Omega \times \mathcal{D} \rightarrow \mathbb{R},$
 means that $z(\cdot,{\bf x})$ is a random variable defined on  $(\Omega,\mathcal{F},{\mathbb{P}})$ for each  ${\bf x}\in \mathcal{D}.$
We assume that $z$ belongs to the  tensor product Hilbert space $L^2(\Omega) \otimes L^2(\mathcal{D})$ which is endowed with the norm
\[
||z||_{L^2(\Omega) \otimes L^2(\mathcal{D})}:=\left(\int_\Omega ||z(\omega,\cdot)||^2_{L^2(\mathcal{D})} \;d\mathbb{P}(\omega)\right)^{\frac{1}{2}} <\infty,
\]
where ${L}^2(\Omega):={L}^2(\Omega,\mathcal{F},{\mathbb{P}}).$
 For any random variable $g$ defined on  $(\Omega,\mathcal{F},{\mathbb{P}}),$
 the mean $\mathbb{E}(g)$ of $g$ is given by
\be
\label{mean}
 \left<g\right>:=\mathbb{E}(g)=\int_\Omega g\;d\mathbb{P}(\omega)<\infty.
\e
For a Hilbert space $H$ of functions on $\mathcal{D}$ and a time interval $[0,T],$ we write  $L^2(0,T;H)$ for  the tensor product space $L^2([0,T])\otimes H.$
However, in particular, we  write $L^2(0,T;\mathcal{D})$ for $L^2(0,T;L^2(\mathcal{D}));$ we will be using these last two notations interchangeably.


We consider an optimization problem involving an incompressible Newtonian flow  with an uncertain inflow in a
backward step domain.
More precisely, the OPNS we want to solve is given by the minimization of the total vorticity: 
\begin{align}
\label{J}
\mathcal{J} & = \frac{1}{2}\|\nabla\times\mathbf{v}\|^2
+\frac{\beta}{2}\|\mathbf{u}\|^2,
\end{align}
where
${\bf v},\; {\bf u}: [0,T] \times \Omega \times \mathcal{D}\rightarrow \mathbb{R}^d$ are random fields,
representing the state (velocity) and  the control functions.
The regularization constant $\beta$ in \eqref{J} balances between minimization of the vorticity and penalization of the control magnitude.
The objective function $\mathcal{J}({\bf v},{\bf u}) $ is a deterministic quantity with uncertain components ${\bf v}$  and ${\bf u}$.

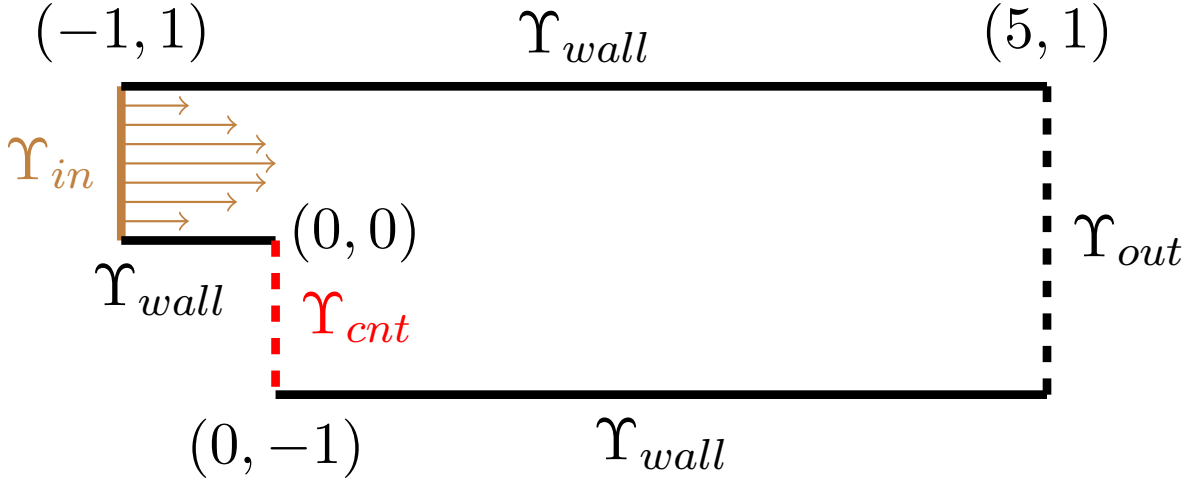
\begin{figure}[h!]
  \begin{center}
\caption{An uncertain inflow in a backward step domain}
\label{NS_flow}
\resizebox{\linewidth}{!}{
%
%
%
%
\begin{tikzpicture}{\small
 \draw[ultra thick,brown] (-1,0) -- (-1,1) node[left,midway] {$\Upsilon_{in}$}; 
 \draw[ultra thick] (-1,1) -- (5,1) node[above,midway] {$\Upsilon_{wall}$}; 
 \draw[ultra thick] (-1,0) -- (0,0) node[below,near start] {$\Upsilon_{wall}$}; 
 \draw[ultra thick,red,dashed] (0,0) -- (0,-1) node[right,midway] {$\Upsilon_{cnt}$};  
 \draw[ultra thick] (0,-1) -- (5,-1) node[below,midway] {$\Upsilon_{wall}$}; 
 \draw[ultra thick,dashed] (5,-1) -- (5,1) node[right,midway] {$\Upsilon_{out}$}; 

 \draw[->,brown] (-1,0.125) -- (-0.5625,0.125);
 \draw[->,brown] (-1,0.250) -- (-0.2500,0.250);
 \draw[->,brown] (-1,0.375) -- (-0.0625,0.375);
 \draw[->,brown] (-1,0.500) -- ( 0.0000,0.500);
 \draw[->,brown] (-1,0.625) -- (-0.0625,0.625);
 \draw[->,brown] (-1,0.750) -- (-0.2500,0.750);
 \draw[->,brown] (-1,0.875) -- (-0.5625,0.875);

\node[anchor=south] at (-1,1.02) {$(-1,1)$};
\node[anchor=south] at (5,1.02) {$(5,1)$};
\node[anchor=north] at (0,-1.02) {$(0,-1)$};
\node[anchor=west] at (-0.05,0.05) {$(0,0)$};
}
\end{tikzpicture}
}
\end{center}
\end{figure}

The minimization of $\mathcal{J}$ is considered subject, $\mathbb{P}$-almost surely, 
to the Navier-Stokes equations\footnote{In this paper, we do not consider the
case of state- or control- or mixed control-state-constrained problems \cite{HIK02,SPW11,PST15}. 
These problems can be tackled via, for instance, semi-smooth Newton algorithms \cite{HS10, IK03, Kanzow04}.}
\be%
 \label{modelprob}
\left\{
\begin{aligned}
    \frac{\partial {\bf v}}{\partial  t}  - \nu \Delta {\bf v} + ({\bf v}\cdot \nabla) {\bf v} + \nabla p &= F_d {\bf u},
       & \mbox{in} \quad  (0,T]\times \Omega  \times  \mathcal{D},  \\
    -\nabla \cdot {\bf v} &= 0, & \mbox{in} \quad (0,T]\times \Omega  \times\mathcal{D} , \\
     {\bf v} &= {\boldsymbol \theta}, & \mbox{on} \quad  (0,T]\times  \Omega \times \Upsilon_{in},\\
     {\bf v} &= 0, & \mbox{on} \quad  (0,T]\times \Omega \times \Upsilon_{wall} ,\\
     \frac{\partial \vv}{\partial  {\bf n}}  &= 0,  & \mbox{on} \quad (0,T]\times \Omega  \times  \Upsilon_{out},\\
     \frac{\partial \vv}{\partial  {\bf n}}  &= F_b {\bf u},  & \mbox{on} \quad (0,T]\times \Omega  \times  \Upsilon_{cnt},\\
     {\bf v}(0,\cdot,\cdot) &= 0, & \mbox{in} \quad \Omega   \times \mathcal{D}.
\end{aligned}
\right.
\e
Here,
${\boldsymbol \theta}:  [0,T] \times \Omega \times \Upsilon_{in} \rightarrow \mathbb{R}^d$ is the inflow boundary condition,
$p: [0,T] \times \Omega \times \mathcal{D} \rightarrow \mathbb{R}$ is the pressure, and
${\bf n}$ is the outward-pointing  normal to the boundary.
The linear  operators $F_d$ and $F_b$ apply the control to the appropriate part of the system.
We consider two cases.
\begin{enumerate}
 \item \emph{Distributed} (or full) control: $F_d = I$, $F_b = 0$, and
 \item \emph{Boundary} control: $F_d=0$, and $F_b = I_{\Upsilon_{cnt}}$, the identity on $\Upsilon_{cnt}$ and zero otherwise.
\end{enumerate}

The  parameter  $\nu$ is the  kinematic  viscosity.
There exist variations of this problem such as a deterministic control \cite{EW12} or an uncertain domain \cite{BH14}.
 
In our experiments, we use a zero initial condition, applying instead an exponentially growing stochastic inflow
\[
\boldsymbol\theta = \begin{bmatrix}\kappa({\bf x},\omega)(1-\mathrm{e}^{-t}) \\ 0\end{bmatrix}
\]
 (see Fig. \ref{NS_flow}), where $\kappa({\bf x},\omega)$ is a random field on $\Upsilon_{in}$.
We assume that $\kappa$ admits the  Karhunen-Loeve expansion (KLE):
\begin{equation}
\kappa({\bf x},\omega) = 4 x_2 (1-x_2) + \sum_{k=1}^{\infty} \frac{1}{2} \gamma^{-k} \cdot \sin(2 \pi k x_2) \cdot \xi_k(\omega).
\label{eq:kle_in}
\end{equation}
Here, the first term is the mean parabolic inflow, whereas $\xi_k$, $k=1,2,\ldots$ are independent uniformly distributed random quantities, $\xi_k \sim \mathcal{U}(-1,1)$.
The parameter $\gamma$ governs the KLE decay rate, and hence the smoothness of the field.
In computational practice, we truncate (\ref{eq:kle_in}) after $m\in \mathbb{N}$ terms such that the error is
sufficiently small:
\begin{equation}
\kappa({\bf x},\omega) \approx \kappa_m({\bf x},\xi(\omega))=  4 x_2 (1-x_2) + \sum_{k=1}^{m} \frac{1}{2} \gamma^{-k} \cdot \sin(2 \pi k x_2) \cdot \xi_k(\omega),
\label{eq:kle_in_new}
\end{equation}
where $\xi:=\{\xi_1,\xi_2,\ldots,\xi_m\}.$
This coefficient mimics the (truncated) Fourier expansion of the Matern covariance function for $\kappa.$
The assumption that  $\kappa(x,\omega)$ admits a KLE representation allows to transform
the stochastic OPNS into a parametric deterministic problem, depending on $\xi:=\{\xi_1,\xi_2,\ldots,\xi_m\}.$
For brevity, since we consider $m$ as an a priory model parameter, we always denote $\kappa_m$ by $\kappa$ in the rest of the paper.

We also assume that each random variable is  characterized by a probability density function $\rho_i:\Gamma_i\rightarrow[0,1].$
If the distribution measure of the random vector $\xi(\omega)$ is absolutely continuous with respect
to the Lebesgue measure, then there exists a joint probability density function
$\rho:\Gamma\rightarrow \mathbb{R}^+,$ where $\Gamma:=\bigotimes^m_{k=1}\Gamma_k\subset \mathbb{R}^m,  \;\;  \rho(\xi)=\prod^m_{k=1}\rho_k(\xi_k), $  and $\rho\in L^{\infty}(\Gamma).$
In particular, given the parametric representation (\ref{eq:kle_in_new}) of  $\kappa({\bf x},\omega),$
 the Doob-Dynkin Lemma \cite{BTZ04}, guarantees that the  solution, for example, $\vv$,
corresponding to the the OPNS  as given by (\ref{J})  and (\ref{modelprob})  admits  exactly the same parametrization;
that is,
\be
\label{solnv}
 \vv(t,\omega,{\bf x})=\vv(t,\xi_1(\omega),\xi_2(\omega),\ldots, \xi_m(\omega),{\bf x}).
\e
Furthermore, we can now replace the probability space $(\Omega,\mathcal{F},{\mathbb{P}})$ with $(\Omega,\mathbb{B}(\Gamma),\rho(\xi)d\xi),$ where $\mathbb{B}(\Gamma)$ denotes the
Borel $\sigma$-algebra on $\Gamma$ and $\rho(\xi)d\xi$ is the finite measure of the vector $\xi.$
Besides, denoting  the space of square-integrable random variables with respect to the density $\rho$ by ${L}_\rho^2(\Gamma),$
 we introduce the space  $L_\rho^2(\Gamma) \otimes L^2(\mathcal{D})$ equipped with the norm
\be
\label{hilrho}
||z||_{L_\rho^2(\Gamma) \otimes L^2(\mathcal{D})}:=\left(\int_\Gamma ||z(\xi,\cdot)||^2_{L^2(\mathcal{D})}\rho(\xi) \;d\xi\right)^{\frac{1}{2}} <\infty.
\e
Similarly, using equation (\ref{mean}), we have
\be
\label{exp2}
 \left<g\right>:= \mathbb{E}[g]= \int_\Gamma g(\xi) \rho(\xi) \;d\xi <\infty.
\e
%

For the  numerical simulation of the  OPNS given by \eqref{J} and \eqref{modelprob}, 
we will adopt the so-called $optimize$-$then$-$discretize$ (OTD) strategy
in which case  we first build an infinite dimensional Lagrangian and then 
consider its variation with respect to state, pressure, control, and 
two Lagrange multipliers that can be identified as the adjoint state
${{\boldsymbol{\lambda}}}$ and adjoint pressure $\mu$  \cite{book::FT2010,Hinze2000}. 
To this end, observe now that the optimization problem under consideration is nonlinear due to the nonlinearity
of the convective term $({\bf v}\cdot \nabla) {\bf v}$.
Both Newton and Picard  iterations have shown
to be good iterative solvers  to tackle these nonlinear equations \cite{ESW14}. 
Since the Picard iteration has a larger radius of convergence compared to the Newton iteration,
our  choice in this contribution is the Picard iteration. We apply the
  so-called Karush-Kuhn-Tucker procedure \cite[Chapter 8.2]{ESW14} 
 to  obtain the optimality system \cite{SSNS17}:
\begin{alignat}{3}
\label{kkt_cond1}
\partial_t\vv-\nu \Delta \vv+ \left(\bar\vv \cdot \nabla\right)\vv+\bigtriangledown p &= F_d{\bf u}  &&\nonumber\\
\nabla\cdot \vv &= 0  && \nonumber\\
\mbox{on} \quad \Upsilon_{wall}: \qquad \vv &= 0  \nonumber\\\
\mbox{on} \quad \Upsilon_{in}:   \qquad {\bf v} & = {\boldsymbol \theta} \nonumber\\
\mbox{on} \quad \Upsilon_{out}: \qquad \frac{\partial \vv}{\partial  {\bf n}}  &= 0, \\
\mbox{on} \quad \Upsilon_{cnt}: \qquad  \frac{\partial \vv}{\partial  {\bf n}}  &= F_b {\bf u}, \nonumber\\
     {\bf v}(0,\cdot,\cdot) &= 0,
\nonumber&\\
\nonumber&\\
\label{kkt_cond2}
-\partial_t\pp- \nu \Delta \pp- \left(\bar\vv \cdot \nabla\right)\pp+\left(\nabla\bar\vv\right)^{T}\pp+\bigtriangledown \mu&=
-\nabla \times (\nabla \times {\bf v}) &&\nonumber\\
\nabla\cdot \pp&=0  &&\nonumber\\
\mbox{on} \quad \Upsilon_{wall}\cup \Upsilon_{in}: \qquad \pp&=0  \\
\mbox{on} \quad \Upsilon_{out} \cup \Upsilon_{cnt}: \qquad \frac{\partial\pp}{\partial\mathbf{n}} & =0 \nonumber\\
\pp(T,\cdot,\cdot) & = 0, \nonumber\\
\nonumber&\\
\label{kkt_cond3}
\beta \uu+(F_d+F_b)\pp &=0,  &&
\end{alignat}
where 
${\bar \vv}$ denotes the velocity from the previous Picard iteration.
Having solved this system, we update ${\bar\vv} = {\vv}$ and so on until convergence.

\section{The discrete problem}
\label{discrete_prob}
The stochasticity of the inflow $\kappa({\bf x},\omega)$ is inherited by all solution components.
For example, the velocity becomes a function of time, $d$ spatial variables and $m$ random parameters, $\vv(t,\xi,{\bf x})$.
This makes the problem far more challenging than its deterministic counterpart.
In this paper, we employ simultaneous independent discretization of all variables, and the stochastic Galerkin method (SGFEM) w.r.t. the parameters $\xi$.

%
The  SGFEM is an intrusive approach
 in which  one seeks the solution in a finite-dimensional subspace
 $   \mathcal{Y}_{n_\xi} \otimes  \mathcal{X}_h \subset L_\rho^2(\Gamma) \otimes L^2(0,T;L^2(\mathcal{D}))$
consisting of tensor products of deterministic functions defined on the spatial domain
and stochastic functions defined on the probability space
\cite{GWZ14,PE09}. 
Different classes of SGFEMs are distinguished by their choices for $\mathcal{Y}_{n_\xi}.$
When all random variables $\xi_k$ are independent
and identically distributed Gaussian, the basis of multidimensional
Hermite polynomials of the total degree $n_\xi$ is called the {\it polynomial chaos}, a terminology originally introduced by Norbert
Wiener \cite{NW38} in the context of turbulence modeling.
The use of Hermite polynomials ensures that the corresponding basis functions are orthogonal with
respect to the Gaussian probability measure. This leads to sparse linear systems, a crucial property that
must be exploited for fast solution schemes \cite{PE09}.
Relying on the  fact that there exists a one-to-one correspondence between the probability density functions
of alternative distributions and the weight functions of certain orthogonal polynomials,
 the concept of Hermite polynomials chaos has  been extended to {\it generalized} polynomial chaos \cite{XK03}.
For instance, if uniform
random variables (having support on a bounded interval)
are chosen, then Legendre polynomials are the correct choice. Similarly, Jacobi polynomials go with beta-distributed random variables.
When random variables with bounded images are used, the convergence and approximation properties
of the resulting SGFEM are discussed in \cite{BTZ04}.
Some SGFEM classes use tensor products of piecewise
polynomials on the subdomains $\Gamma_k\subset \Gamma$ \cite{BTZ04, EEOS05, NT09}, where
the polynomial degree is fixed and approximation is improved by refining the partition of $\Gamma.$

Throughout this paper we use the classical, so-called {\it spectral} SGFEM (see e.g. \cite{EF07, GS91,  LKDNG03, PE09}), which employs global Legendre polynomials for uniformly distributed $\xi_k$.
We discretize each $\xi_k$ independently via polynomials of the maximal degree $n_\xi-1$.
That is, we use the space
\begin{equation*}
\mathcal{Y}_{n_\xi} = \left\{\mathrm{span} \left(\psi_j(\xi) = \prod_{k=1}^{m} \psi_{j_k} (\xi_k)\right), \quad \forall j_k = 0,\ldots,n_\xi-1, \quad j=(j_1,\ldots,j_m) \right\}.
\end{equation*}
This implies that the global discretization is done via multilinear polynomials of total degree $m (n_\xi-1)$.
The total number of basis functions in $\mathcal{Y}_{n_\xi}$ is $N_\xi=n_\xi^{m}$, which can be prohibitively large for a straightforward storage.
We circumvent this problem by using low-rank product decompositions on the discrete level in Section \ref{tt_decom}.

For the spatial discretization we use  mixed finite element spaces
with stable elements, i.e. elements that satisfy the {\it inf-sup} condition, e.g. the $Q_2$-$Q_1$ pair \cite{JS14}.
This gives us different spaces $V_h=\mbox{span}\{\phi_1,\ldots, \phi_{N_v}\} \subset L^2(0,T;H^1_0(\mathcal{D}))$ 
for the velocity and $\tilde V_h=\mbox{span}\{{\tilde\varphi}_1,\ldots, {\tilde\varphi}_{N_p}\}\subset L^2(0,T;L^2(\mathcal{D}))$ for the pressure.

The time is discretized on a uniform grid, $t_n = \tau n$, $n=1,\ldots,N_t$, such that $t_{N_t}=T$.
The fully discrete coefficients of the velocity will be denoted as $\vv_h(n,j,k)$,
and similarly the coefficients of $p,{\bf u},\pp$, and $\mu$ are denoted by $p_h, {\bf u}_h, \pp_h$ and $\mu_h$, respectively.
The solution functions are approximated as
\begin{equation}
\label{sbdiscrete}
\begin{aligned}
  \vv(t_n,\xi,{\bf x}) & \approx \sum_{j=0}^{N_\xi-1} \sum_{k=1}^{N_v}\vv_h(n,j,k) \psi_j(\xi) \phi_k({\bf x}) , \\
  p(t_n,\xi,{\bf x}) & \approx \sum_{j=0}^{N_\xi-1} \sum_{k=1}^{N_p}p_h(n,j,k) \psi_j(\xi) {\tilde\varphi}_k({\bf x}).
\end{aligned}
\end{equation}
Analogous expressions hold for ${\bf u}, \pp$ and $\mu$.

We plug these expressions into \eqref{kkt_cond1}--\eqref{kkt_cond3}, and project the optimality system onto $\mathrm{span}(\psi_j \phi_k)$ for the velocity equations and $\mathrm{span}(\psi_j \tilde\varphi_k)$ for the pressure equations.
We introduce the following matrices, composed from the corresponding bilinear forms:
\begin{itemize}
 \item $L_0(k,k') = \int\limits_{\mathcal{D}} \nabla \phi_k({\bf x}) \cdot \nabla \phi_{k'}({\bf x}) d{\bf x}$ is the Laplace matrix for a single velocity component, and $L = \mathrm{blkdiag}(L_0, L_0)$ is the Laplace matrix for both velocity components.
 \item $M_0(k,k') = \int\limits_{\mathcal{D}}  \phi_k({\bf x}) \phi_{k'}({\bf x}) d{\bf x}$ and $M = \mathrm{blkdiag}(M_0,M_0)$, the mass matrix.
 \item $B(k,k') = \int\limits_{\mathcal{D}}  \tilde\varphi_k({\bf x}) \nabla \phi_{k'}({\bf x}) d{\bf x}$, the mixed gradient matrix.
 \item $N_{n} = \mathrm{blkdiag}(N_{0,n},N_{0,n})$, the convection matrix, where
 \begin{eqnarray}
 N_{0,n}(jk,j'k') & = & \int\limits_{\Gamma} \psi_j \sum_{j''=0}^{N_\xi-1} N_{s}[\bar\vv_h(n,j'',:)](k,k') \psi_{j''} \psi_{j'} \rho d\xi, \\
 N_{s}[\bar\vv_h](k,k') & = & \int\limits_{\mathcal{D}} \phi_k({\bf x}) \sum_{k''=1}^{N_v} \phi_{k''}({\bf x}) \bar\vv_h(k'') \cdot \nabla \phi_{k'}({\bf x}) d{\bf x}.
 \label{eq:N_detailed}
 \end{eqnarray}
 \item $W_{n}(jk,j'k') = \int\limits_{\Gamma} \psi_j(\xi) \sum_{j''=0}^{N_\xi-1} W_{s}[\bar\vv_h(n,j'',:)](k,k') \psi_{j''}(\xi) \psi_{j'}(\xi) \rho(\xi)d\xi$, the adjoint convection matrix, where
\begin{equation}
 W_{s}[\bar\vv_h](k,k') = \int\limits_{\mathcal{D}} \phi_k({\bf x}) \sum_{k''=1}^{N_v}\nabla\bar\vv_h(k'') \phi_{k''}({\bf x}) \phi_{k'}({\bf x}) d{\bf x}.
\label{eq:Ws}
\end{equation}
\end{itemize}
Since the Legendre polynomials are orthogonal, a proper normalization can turn the stochastic mass matrix into an identity.

We use the implicit Euler approximation for the time derivative, although it can be replaced by higher-order schemes.
Overall, this gives the following discrete optimality equations.
\allowdisplaybreaks
\begin{alignat}{3}
\label{kkt_discr1}
\frac{\vv_h(n,j,k)-\vv_h(n-1,j,k)}{\tau} - \sum_{k'=1}^{N_v} \nu L(k,k')\vv_h(n,j,k') \nonumber\\
 + \sum_{j'=0}^{N_\xi-1}\sum_{k'=1}^{N_v} N_{n}(jk,j'k') \vv_h(n,j',k') + \sum_{k'=1}^{N_p} B(k',k)^T p_h(n,j,k') &= \sum_{k'=1}^{N_v} M_{cnt}(k,k') {\bf u}_h(n,j,k')  &&\nonumber\\
\sum_{k'=1}^{N_v} B(k,k') \vv_h(n,j,k') &= 0  && \\
\mbox{for} \quad k \in \Upsilon_{wall}: \qquad \vv_h(n,j,k) &= 0  \nonumber\\\
\mbox{for} \quad k \in \Upsilon_{in}:   \qquad \vv_h(n,j,k) & = {\boldsymbol \theta}_h(n,j,k) \nonumber\\
     \vv_h(0,j,k) &= 0,
\nonumber&\\
\nonumber&\\
\label{kkt_discr2}
\frac{\pp_h(n,j,k)-\pp_h(n+1,j,k)}{\tau} - \sum_{k'=1}^{N_v} \nu L(k,k')\pp_h(n,j,k')   \nonumber\\
 + \sum_{j'=0}^{N_\xi-1}\sum_{k'=1}^{N_v} (-N_{n}(jk,j'k') + W_{n}(jk,j'k')) \pp_h(n,j',k') \nonumber\\
 + \sum_{k'=1}^{N_p} B(k',k)^T \mu_h(n,j,k') &= \sum_{k'=1}^{N_v}   L(k,k') \vv_h(n,j,k') &\nonumber\\
\sum_{k'=1}^{N_v} B(k,k') \pp_h(n,j,k') &= 0  && \\
\mbox{for} \quad k \in \Upsilon_{wall}\cup \Upsilon_{in}: \qquad \pp_h(n,j,k)&=0  \nonumber\\
\pp_h(N_t,j,k) &= 0, \nonumber\\
\nonumber&\\
\label{kkt_discr3}
\beta \sum_{k'=1}^{N_v} M(k,k') \uu_h(n,j,k')+ \sum_{k'=1}^{N_v} M_{cnt}(k,k') \pp_h(n,j,k') &=0,  &&
\end{alignat}
where $M_{cnt} = F_d + F_b$.
This system can be written in a compact matrix form
\begin{align}
\label{stokespa}
 \underbrace{\left[\begin{array}{ccc}
\bm{M}_1 & 0 & -\bm{K}^{*} \\
0 & \beta\bm{M}_2 & \bm{M_3}^T \\
-\bm{K} & \bm{M_3} & 0 \\
\end{array}\right]}_{:=\mathfrak{A}} 
\underbrace{\left[\begin{array}{c}
{\bf v}_h \\
{ p}_h \\
{\bf u}_h \\
{{\boldsymbol{\lambda}}}_h \\
{\mu}_h \\
\end{array}\right]}_{:=\bf y}=\underbrace{\left[\begin{array}{c}
{\bf 0} \\
{\bf 0} \\
{\bf g}_h \\
\end{array}\right]}_{:=\bf b}.
\end{align}
Since we use tensor product basis functions,
each of the block matrices in $\mathfrak{A}$ can be represented via Kronecker products.
In particular, the forward and adjoint operators for the time-stochastic-space Navier-Stokes equations write as
\begin{eqnarray}
\label{eq:K3}
\bm{K} & = & I_{N_t} \otimes I_{n_\xi}^{\otimes m} \otimes \mathcal{L}  + C\tau^{-1} \otimes I_{n_\xi}^{\otimes m} \otimes  \mathcal{M}  +  \bm{N}[\bar \vv_h], \\
\bm{K}^* & = & I_{N_t} \otimes I_{n_\xi}^{\otimes m} \otimes \mathcal{L}  + C^\top\tau^{-1} \otimes I_{n_\xi}^{\otimes m} \otimes  \mathcal{M}  +  \bm{W}[\bar \vv_h] - \bm{N}[\bar \vv_h],
\end{eqnarray}
where \cite{BSOS215}
\begin{itemize}
\item $I_{N_t}$ is the identity  matrix and $C=\mathrm{tridiag}(-1,1,0)$
comes from the implicit Euler discretization;
\item $I_{n_\xi}$ is the identity mass matrix for each variable $\xi$.
\item $\mathcal{L} = \begin{bmatrix}\nu L & B^\top \\ B & 0 \end{bmatrix}$ is the discretization of the stationary Stokes operator,
and $\mathcal{M}=\mathrm{blkdiag}(M, 0)$ for the velocity mass matrix, extended by zeros to the pressure space;
\item $\bm{N}$ and $\bm{W}$ correspond to the convection terms. They depend on the {\it low-rank} structure of the solution $\bar\vv_h$,
which will be introduced in the next section.
\end{itemize}
Furthermore, we introduce an extended velocity Laplace matrix $\mathcal{L}_0 = \mathrm{blkdiag}(L, 0)$.
Then we have
\begin{align}
\label{stoms}
 \bm{M}_1 &=   I_{N_t}\otimes I_{n_\xi}^{\otimes m}\otimes \mathcal{L}_0, &
 \bm{M}_2 &=   I_{N_t}\otimes I_{n_\xi}^{\otimes m} \otimes M, & \bm{M}_3 & = I_{N_t} \otimes I_{n_\xi}^{\otimes m}\otimes \begin{bmatrix}M_{cnt} \\ 0\end{bmatrix},
\end{align}
where
\begin{enumerate}
 \item $M_{cnt}=M$ for the distributed control, and
 \item $M_{cnt}(k,k') = \left\{\begin{array}{ll}M(k,k'), & \mbox{if }k,k' \in \Upsilon_{cnt}, \\ 0, & \mbox{otherwise}, \end{array}\right.$ for the boundary control.
\end{enumerate}

%

The third vector ${\bf g}_h$ in the right-hand side ${\bf b}$ in \eqref{stokespa} depends on the boundary conditions.
The KLE yields the following tensor form for the inflow function,
\begin{equation}
\boldsymbol\theta_h = \mathbf{g}_t \otimes \sum_{k=0}^{m} \left[ \left(\bigotimes_{\ell=1}^{k-1} \mathbf{e}_1\right) \otimes \mathbf{e}_2 \otimes \left(\bigotimes_{\ell=k+1}^{m} \mathbf{e}_1 \right) \otimes \boldsymbol\theta_{kh}\right],
\label{eq:inflow-discr}
\end{equation}
where $\mathbf{g}_t$ is the vector of the $1-\exp(-t)$ factor sampled at all time points, $\mathbf{e}_1$ and $\mathbf{e}_2$ are the first and second unit vectors, respectively, and $\boldsymbol\theta_{kh}$ is the discretization of the $k$-th spatial factor in \eqref{eq:kle_in}.
Now, distinguishing inner and boundary degrees of freedom, we partition the PDE matrix into the corresponding blocks,
$$
\bm{K} = \begin{bmatrix} \bm{K}_{II} & \bm{K}_{IB} \\ \bm{K}_{BI} & \bm{K}_{BB} \end{bmatrix}.
$$
The actual separation of inner (I) and boundary (B) elements is done in the spatial factors only.
Then the usual finite element approach is employed: we eliminate $\bm{K}_{IB}$ and $\bm{K}_{BI}$ in the left hand side of \eqref{stokespa},
replacing $\bm{K}$ by $\bm{K}_{II}$,
and construct the right-hand side from \eqref{eq:inflow-discr} as $\mathbf{g}_h = \bm{K}_{IB} \boldsymbol\theta_h.$

\section{Tensor Train decomposition approach}
\label{tt_decom} 

The solution $y(t,\xi_1,\ldots,\xi_m,{\bf x})$ (where $y$ stands for $\vv,p,{\bf u},\pp$ or $\mu$) is a multivariate function.
After discretization, independently in each variable $t,\xi_1,\ldots,\xi_m,{\bf x}$,
the discrete values of $y$ can be enumerated by $D=m+2$ independent 
indices\footnote{We don't separate different \emph{components} of ${\bf x}$,
which are actually \emph{dependent}, due to the domain geometry. 
Therefore, all spatial degrees of freedom are treated as one variable.}, i.e. they form a \emph{tensor}.
Storing such a tensor directly might be prohibitively expensive.
Therefore, we \emph{decompose} tensors using the separation of variables.

Given a $D$-index tensor $y(i_1,\ldots,i_D)$, its Tensor Train decomposition \cite{osel-tt-2011} is written as follows:
\begin{equation}
y(\mathbf{i}) = \sum\limits_{s_1=1}^{r_1} \cdots \sum\limits_{s_{D-1}=1}^{r_{D-1}} y^{(1)}_{s_1}(i_1) y^{(2)}_{s_1,s_2}(i_2) \cdots y^{(D)}_{s_{D-1}}(i_D),
\label{eq:tt}
\end{equation}
where $\mathbf{i}$ denotes the multi-index, $\mathbf{i} = (i_1,\ldots,i_D)$.
Each element of $y$ is represented (or approximated) by a sum of products of elements of smaller tensors $y^{(\ell)}$, $\ell=1,\ldots,D$, called TT blocks.
The auxiliary summation indices $s_1,\ldots,s_{D-1}$ are called rank indices, and their ranges $r_1,\ldots,r_{D-1}$ are called TT ranks.
The TT decomposition is also known as the Matrix Product States \cite{schollwock-2005,PerezGarcia-mps-2007}, since, omitting $s_1,\ldots,s_{D-1}$, we can say that an element $y(\mathbf{i})$ is equal to a product of $i_\ell$-dependent matrices.
The TT ranks depend on the particular tensor and accuracy, if \eqref{eq:tt} is satisfied approximately.
Denoting upper bounds $r_\ell \lesssim r$, $i_\ell\lesssim N$, $\ell=1,\ldots,D-1$, we estimate the storage cost of the TT blocks to be in $\mathcal{O}(DNr^2)$, which can be much less than the full amount $N^D$.
Usually, we refer to $r$ as the maximal TT rank.

The TT format allows to compress a given tensor up to a threshold $\varepsilon$, 
also if $y$ is already given in the TT format, but possibly with overestimated TT ranks.
This happens if we sum two arrays in the TT format, or perform the matrix multiplication.
A matrix $A$, acting on tensors of type $y$, can be seen as a $2D$-dimensional tensor and represented in a slightly different TT format,
\begin{equation}
A(\mathbf{i},\mathbf{j}) = \sum\limits_{s_1,\ldots,s_{D-1}=1}^{R_1,\ldots,R_{D-1}} A^{(1)}_{s_1}(i_1,j_1) \cdots  A^{(D)}_{s_{D-1}}(i_D,j_D),
\label{eq:ttm}
\end{equation}
where $\mathbf{i},\mathbf{j}$ are multi-indices consisting of $i_\ell$ and $j_\ell$, respectively.
This is consistent with the Kronecker product ($\otimes$) if $D=2$ and $R=1$.
The matrix-vector product $\sum_{\mathbf{j}} A(\mathbf{i},\mathbf{j}) y(\mathbf{j})$ can then be implemented in the TT format block by block \cite{osel-tt-2011,schollwock-2005}.

\subsection*{TT format of the convection matrix}
We assume now that the velocity, in particular the previous Picard iterate, is represented by a TT decomposition \eqref{eq:tt},
\begin{equation}
\bar\vv_h(n, j_1,\ldots,j_m, k) = \bar\vv^{(1)}(n) \bar\vv^{(2)}(j_1) \cdots \bar\vv^{(m+1)}(j_m) \bar\vv^{(m+2)}(k),
\label{eq:hat_v}
\end{equation}
with the ranks $r_1,\ldots,r_{m+1}$.
We are going to derive a matrix TT format \eqref{eq:ttm} for $\bm{N}[\bar\vv_h]$ from \eqref{eq:K3}.
First, we notice from \eqref{eq:N_detailed} that $\bm{N}$ depends linearly on $\bar\vv_h$.
Therefore, we can plug \eqref{eq:hat_v} into \eqref{eq:N_detailed} and distribute the summations.
This gives
\begin{eqnarray}
N_{0,n}(jk,j'k') &  = & \bar\vv^{(1)}(n) \\
            \label{eq:N_tt_xi}
               &\cdot & \left[\sum_{j''=0}^{N_\xi-1} \int_{\Gamma} \psi_j(\xi)\psi_{j'}(\xi)\psi_{j''}(\xi) \rho(\xi)d\xi \cdot  \bar\vv^{(2)}(j''_1) \cdots \bar\vv^{(m+1)}(j''_m) \right] \\
               &\cdot & \left[\sum_{k''=1}^{N_v} \int_{\mathcal{D}} \phi_k({\bf x}) \nabla \phi_{k'}({\bf x}) \phi_{k''}({\bf x}) dx \cdot \bar\vv^{(m+2)}(k'') \right].
\end{eqnarray}
The last term in this expression is a deterministic convection matrix $N_s\left[\bar\vv^{(m+2)}\right]$ \eqref{eq:N_detailed},
which can be assembled with the cost $\mathcal{O}(r_{m+1} N_v)$.
The middle term can be further factorised due to the tensor product PCE space $\mathcal{Y}_{n_\xi}$.
Indeed, introducing the matrices of triple products for all $k=1,\ldots,m,$
\begin{equation}
H_{j_k''}(j_k,j_k') = \int\limits_{\Gamma_k} \psi_{j_k}(\xi_k)\psi_{j_k'}(\xi_k)\psi_{j_k''}(\xi_k) \rho_k(\xi_k)d\xi_k, \quad H_{j_k''} \in \mathbb{R}^{n_\xi \times n_\xi},
\label{eq:H3}
\end{equation}
we see that the multi-dimensional triple product in \eqref{eq:N_tt_xi} can be written as a Kronecker product of $m$ individual matrices \eqref{eq:H3}.
Finally, we notice that $N_{0,n}$ acts independently on each time step $n$, i.e. it is diagonal w.r.t. $n$, and so is $\bm{N}$.
Overall, given the TT format of $\bar\vv$ \eqref{eq:hat_v}, there exists a TT format for the convection matrix with the same TT ranks,
\begin{equation}
\begin{split}
\bm{N}[\bar\vv_h] &= \sum_{s_1,\ldots,s_{m+1} =1}^{r_1,\ldots,r_{m+1}}\mathrm{diag}(\bar\vv^{(1)}_{s_1}) \\
       & \otimes  \left[\sum_{j_1=0}^{n_\xi-1}H_{j_1}\bar\vv^{(2)}_{s_1,s_2}(j_1)\right] \otimes \cdots \otimes \left[\sum_{j_{m}=0}^{n_\xi-1}H_{j_{m}}\bar\vv^{(m+1)}_{s_{m},s_{m+1}}(j_{m})\right] \\
       & \otimes  \mathrm{blkdiag}\left(N_s\left[\bar\vv^{(m+2)}_{s_{m+1}}\right], N_s\left[\bar\vv^{(m+2)}_{s_{m+1}}\right]\right).
\end{split}
\label{eq:conv_tt}
\end{equation}
Notice that the right-hand side $\mathbf{g}_h = \bm{K}_{IB} \boldsymbol\theta_h$ in \eqref{stokespa} must be recomputed in every Picard iteration, since $\bm{K}_{IB}$ carries the corresponding (new) part $\bm{N}_{IB}$.
The adjoint convection matrix can be constructed similarly, with $W_s$ defined in \eqref{eq:Ws},
\begin{equation*}
\begin{split}
\bm{W}[\bar\vv_h] &= \sum_{s_1,\ldots,s_{m+1} =1}^{r_1,\ldots,r_{m+1}}\mathrm{diag}(\bar\vv^{(1)}_{s_1}) \\
       & \otimes  \left[\sum_{j_1=0}^{n_\xi-1}H_{j_1}\bar\vv^{(2)}_{s_1,s_2}(j_1)\right] \otimes \cdots \otimes \left[\sum_{j_{m}=0}^{n_\xi-1}H_{j_{m}}\bar\vv^{(m+1)}_{s_{m},s_{m+1}}(j_{m})\right]
        \otimes  W_s\left[\bar\vv^{(m+2)}_{s_{m+1}}\right].
\end{split}
\end{equation*}

\subsection*{Alternating linear solver}
The Oseen equation  given by \eqref{stokespa} is a large linear system for ${\bf y}$, and needs to be solved keeping all the components in the TT format in order to keep the storage requirements low.
A state of the art approach to this problem is alternating tensor product algorithms \cite{white-dmrg-1993,schollwock-2005,holtz-ALS-DMRG-2012}.
Given the system $\mathfrak{A}{\bf y}={\bf b}$, we
iterate over $\ell=1,\ldots,D$, and seek only the elements of $y^{(\ell)}$ in each step, while the other TT blocks are fixed.
Notice that the TT format \eqref{eq:tt} is \emph{linear} with respect to the elements of each $y^{(\ell)}$, i.e., there exists a matrix $Y_\ell$ \eqref{eq:frame} such that ${\bf y} = Y_\ell y^{(\ell)}$.
This renders $\mathfrak{A}{\bf y}={\bf b}$ an overdetermined system $\mathfrak{A} Y_\ell y^{(\ell)}={\bf b}$ w.r.t. the elements of $y^{(\ell)}$.
This system is resolved via a projection onto $Y_\ell$, such that $y^{(\ell)}$ is computed from a smaller system $\left(Y_\ell^\top \mathfrak{A} Y_\ell\right) y^{(\ell)} = Y_\ell^\top {\bf b}$.
Particularly efficient realizations are the Density Matrix Renormalization Group (DMRG) methods from quantum physics \cite{white-dmrg-1993,jeckelmann-dmrgsolve-2002} and the Alternating Minimal Energy (AMEn) algorithm \cite{ds-amen-2014} from the mathematical community.
The DMRG approach computes two neighboring TT blocks in each step, say, $\ell$ and $\ell+1$, which allows to adapt the TT rank $r_\ell$ to the desired accuracy.
While the  DMRG method was found to be extremely effective for spin Schroedinger eigenvalue problems, it may deliver insufficient accuracy for linear systems, especially with non-symmetric matrices.
The AMEn method is usually faster and more robust, since it seeks only one TT block in each step, but then performs an explicit augmentation of the computed TT block of the solution by a TT block of the current residual.
This allows to change TT ranks and facilitate convergence.

However, the standard AMEn method is not applicable to $\mathfrak{A}$ directly: due to indefiniteness of $\mathfrak{A}$, its Galerkin projection may be degenerate.
For example, consider
$$
\mathfrak{A} = \begin{bmatrix}1 & 0 & 1 \\ 0 & 1 & 1 \\ 1 & 1 & 0\end{bmatrix} \quad \mbox{and} \quad Y_\ell = \begin{bmatrix}0 \\ 0 \\ 1\end{bmatrix}.
$$
One can readily verify that $Y_\ell^\top \mathfrak{A} Y_\ell=0$.
To cope with this issue, we employ the so-called block TT format \cite{dkos-eigb-2014} and project each \emph{submatrix} of $\mathfrak{A}$ separately.

Let us enumerate the components of \eqref{stokespa} as $y_{\iota}$, where
$y_1 = \vv_h$, $y_2=p_h$, $y_3={\bf u}_h$, $y_4=\pp_h$ and $y_5=\mu_h$.
We approximate all components simultaneously by a TT format with the same blocks except the $\ell$-th one for some $\ell=1,\ldots,D$, where the enumerator $\iota=1,\ldots,5$ appears,
\begin{equation}
y_{\iota}(\mathbf{i}) = \sum_{s_1,\ldots,s_{D-1}} y^{(1)}_{s_1}(i_1) \cdots y^{(\ell)}_{s_{\ell-1,s_\ell}}(i_\ell,\iota) \cdots y^{(D)}_{s_{D-1}}(i_D).
\label{eq:btt}
\end{equation}
Using an SVD of the core $y^{(\ell)}$, we can turn \eqref{eq:btt} into the form
$$
y_{\iota}(\mathbf{i}) = \sum_{s_1,\ldots,s_{D-1}} y^{(1)}_{s_1}(i_1) \cdots y^{(\ell)}_{s_{\ell-1},s_\ell}(i_\ell) \cdot y^{(\ell+1)}_{s_{\ell},s_{\ell+1}}(i_{\ell+1},\iota) \cdots y^{(D)}_{s_{D-1}}(i_D)
$$
or vice versa.
That is, we can \emph{move} $\iota$ to any particular TT block in the course of the alternating iteration \cite{dkos-eigb-2014}.
When the $\ell$-th block carries $\iota=1,\ldots,5$, the Oseen system \eqref{stokespa} is projected onto the other TT blocks component by component, giving a system
\begin{equation}
\label{stokespa-red}
\begin{bmatrix}
\hat M_1 & 0 & -\hat K^{*} \\
0 & \beta \hat M_2 & \hat M_3^T \\
-\hat K & \hat M_3 & 0 \\
\end{bmatrix}
y^{(\ell)}=
\begin{bmatrix}
0 \\
0 \\
\hat g \\
\end{bmatrix}
\end{equation}
on the components of the current TT block.
Here, $\hat A = Y_\ell^T {\bm A} Y_\ell$ for $\bm A \in \{\bm{K},\bm{M}_1,\bm{M}_2,\bm{M}_3\}$ are the submatrices projected onto the Galerkin basis $Y_\ell$ composed from the frozen TT blocks,
\begin{equation}
\begin{split}
Y_\ell(\mathbf{i},s_{\ell-1} i'_\ell s_\ell) & = y^{(1)}(i_1) \cdots y^{(\ell-1)}_{:,s_{\ell-1}}(i_{\ell-1}) \cdot I_{n_\ell}(i_\ell,i_\ell') \cdot y^{(\ell+1)}_{s_\ell,:}(i_{\ell+1}) \cdots y^{(D)}(i_D),
\end{split}
\label{eq:frame}
\end{equation}
where ``$:$'' stands for the full index range.

Under certain assumptions on the original system, one can prove that the block-reduced system \eqref{stokespa-red} is not degenerate.
\begin{theorem}
Suppose that the symmetric parts of $\bm{K}$ and $\bm{K}^*$ are positive, $\bm{K}+\bm{K}^\top>0$, $\bm{K}^*+\bm{K}^{*\top}>0$.
Then the reduced matrix in \eqref{stokespa-red} is invertible.
\end{theorem}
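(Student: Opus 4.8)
The plan is to turn the $3\times 3$ block system \eqref{stokespa-red} into a Schur complement by block Gaussian elimination and then to use the two positivity hypotheses. Two preliminaries are needed. In the alternating iteration the frozen TT blocks are kept in the usual orthogonal gauge, so the frame matrix $Y_\ell$ in \eqref{eq:frame} has orthonormal columns, hence full column rank; therefore $\hat{A}+\hat{A}^\top = Y_\ell^\top(\bm{A}+\bm{A}^\top)Y_\ell$ for every sub-block $\bm{A}$, and $\hat{A}\succeq 0$ whenever $\bm{A}\succeq 0$. Consequently $\hat{K}+\hat{K}^\top>0$ and $\hat{K}^*+\hat{K}^{*\top}>0$; since any real matrix with positive definite symmetric part has trivial kernel, $\hat{K}$ and $\hat{K}^*$ are invertible. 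Furthermore $\bm{M}_2 = I_{N_t}\otimes I_{n_\xi}^{\otimes m}\otimes M>0$ and $\bm{M}_1 = I_{N_t}\otimes I_{n_\xi}^{\otimes m}\otimes\mathcal{L}_0\succeq 0$, so $\hat{M}_2>0$ and $\hat{M}_1\succeq 0$.

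Suppose now that $y^{(\ell)}$ is in the kernel of the reduced matrix, and write $a,b,c$ for its three block components (the state, control and adjoint parts). The three block rows of \eqref{stokespa-red} are $\hat{M}_1 a = \hat{K}^* c$, $\beta\hat{M}_2 b = -\hat{M}_3^\top c$ and $\hat{K}a = \hat{M}_3 b$. Since $\hat{K}$ and $\hat{M}_2$ are invertible and $\beta>0$, the last two relations give $b = -\tfrac1\beta\hat{M}_2^{-1}\hat{M}_3^\top c$ and then $a = \hat{K}^{-1}\hat{M}_3 b = -\tfrac1\beta\hat{K}^{-1}\hat{G}c$, where $\hat{G}:=\hat{M}_3\hat{M}_2^{-1}\hat{M}_3^\top\succeq 0$. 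Inserting this into $\hat{M}_1 a = \hat{K}^* c$ leaves the single equation $\hat{S}c = 0$ with the non-symmetric Schur complement $\hat{S}:=\hat{K}^* + \tfrac1\beta\hat{M}_1\hat{K}^{-1}\hat{G}$. Everything then reduces to showing $\hat{S}$ nonsingular, since $c=0$ forces $b=0$ and $a=0$.

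For that final step I would pair $\hat{S}c = 0$ with $c$, obtaining $c^\top\hat{K}^* c = -\tfrac1\beta\,c^\top\hat{M}_1\hat{K}^{-1}\hat{G}\,c$; the left-hand side equals $\tfrac12 c^\top(\hat{K}^*+\hat{K}^{*\top})c\ge 0$, and it vanishes only for $c=0$. Hence it is enough to establish that $c^\top\hat{M}_1\hat{K}^{-1}\hat{G}\,c\ge 0$ for all $c$, i.e. that the symmetric part of $\hat{M}_1\hat{K}^{-1}\hat{G}$ is positive semidefinite. When the observation operator equals the control operator $\hat{G}$, i.e. $\hat{M}_1=\hat{G}$, this is immediate: $c^\top\hat{M}_1\hat{K}^{-1}\hat{M}_1 c = (\hat{M}_1 c)^\top\hat{K}^{-1}(\hat{M}_1 c)=\tfrac12(\hat{M}_1 c)^\top(\hat{K}^{-1}+\hat{K}^{-\top})(\hat{M}_1 c)\ge 0$, since $\hat{K}^{-1}+\hat{K}^{-\top}=\hat{K}^{-1}(\hat{K}+\hat{K}^\top)\hat{K}^{-\top}>0$. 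I expect the main obstacle to be the general case $\hat{M}_1\ne\hat{G}$, in which the cross term $c^\top\hat{M}_1\hat{K}^{-1}\hat{G}\,c$ has no obvious sign from $\hat{M}_1\succeq0$, $\hat{K}^{-1}+\hat{K}^{-\top}>0$, $\hat{G}\succeq0$ alone; closing the argument there should require the concrete structure of $\bm{M}_1,\bm{M}_2,\bm{M}_3$ (both $\bm{M}_1$ and $\hat{G}$ vanish exactly on the pressure block, and the velocity stiffness and mass matrices are comparable on a fixed mesh), and possibly a lower bound on $\beta$. An equivalent route --- viewing \eqref{stokespa-red} as a non-symmetric saddle-point problem with $\hat{M}_1$ and $\beta\hat{M}_2$ on the diagonal and coupling blocks $[-\hat{K},\ \hat{M}_3]$ and its partial transpose, and checking the two kernel inf-sup conditions --- leads to the nondegeneracy of the same cross-term form, so I do not expect a shortcut around it.
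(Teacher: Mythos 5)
Your preliminaries (orthogonality of the frame $Y_\ell$, hence positivity of the symmetric parts of $\hat K$ and $\hat K^*$ and their invertibility, plus $\hat M_1 \ge 0$ and $\hat M_2 > 0$) coincide with the paper's, which obtains the same facts via the Poincar\'e interlacing theorem. The gap is exactly where you locate it: after eliminating the control and state blocks onto the adjoint variable $c$, you are left with the cross term $c^\top \hat M_1 \hat K^{-1}\hat G\, c$, which is not a quadratic form in any single vector and has no definite sign from $\hat M_1\ge 0$, $\hat G\ge 0$ and $\hat K^{-1}+\hat K^{-\top}>0$ alone. You acknowledge this and do not close the argument, so the proof is incomplete; and the failure is structural rather than technical, because pairing all the kernel equations against $c$ discards the saddle-point structure that makes the term tractable.

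The paper sidesteps the obstacle by testing against the \emph{primal} variables instead. It invokes the invertibility criterion for KKT matrices from \cite[Theorem 3.2]{BenGolLie05}: the matrix is nonsingular provided the constraint block $\begin{bmatrix}-\hat K & \hat M_3\end{bmatrix}$ has full rank (true, since $\hat K$ is invertible) and its kernel meets the kernel of $\mathrm{blkdiag}(\hat M_1,\ \beta\hat M_2)$ only in $\{0\}$. A kernel vector of the constraints has the form $w=(\hat K^{-1}\hat M_3 u,\; u)$, and
$$
w^\top \begin{bmatrix}\hat M_1 & 0\\ 0 & \beta\hat M_2\end{bmatrix} w
= \left(\hat K^{-1}\hat M_3 u\right)^\top \hat M_1 \left(\hat K^{-1}\hat M_3 u\right) + \beta\, u^\top\hat M_2 u > 0
$$
for $u\neq 0$: the first summand is the manifestly nonnegative quadratic form that your cross term refuses to become, and the second is strictly positive because $\hat M_2>0$. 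No relation between $\hat M_1$ and $\hat G$ and no lower bound on $\beta$ is needed. (Equivalently, pairing the first two block rows of the kernel equations with $(y,u)$ rather than with $c$ cancels the $\hat K$--$\hat K^*$ coupling against the constraint equation and leaves $y^\top\hat M_1 y+\beta u^\top\hat M_2 u=0$; note that this cancellation, like the paper's appeal to the symmetric KKT criterion, implicitly treats $\hat K^*$ as $\hat K^\top$, an assumption your Schur-complement route did not make.) If you want to salvage your write-up, replace the energy estimate in $c$ by this primal pairing.
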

\begin{proof}
Due to the Poincar\'e theorem, the eigenvalues of an orthogonal projection of a symmetric matrix interlace with the eigenvalues of the original matrix.
In particular,
$$
\lambda_{\min}(\hat K + \hat K^\top) = \lambda_{\min}\left(Y_k^\top (\bm{K}+\bm{K}^\top) Y_k \right) \ge \lambda_{\min}(\bm{K}+\bm{K}^\top)>0.
$$
We also use that $Y_k$ is orthogonal in the AMEn method.
So, the symmetric part of $\hat K$ (as well as of $\hat K^*$) is positive.
Moreover, by the same interlace theorem we have that $\hat M_1 \ge 0$ and $\hat M_2>0$.
Now employ \cite[Theorem 3.2]{BenGolLie05},
which says that sufficient conditions for the KKT matrix to be invertible are that
the matrix of constraints $\begin{bmatrix}-\hat K & \hat M_3\end{bmatrix}$ is full rank, and
$$
\mathrm{ker}\begin{bmatrix}\hat M_1 & 0 \\ 0 & \beta\hat M_2\end{bmatrix} \cap \mathrm{ker}\begin{bmatrix}-\hat K & \hat M_3\end{bmatrix} = \{0\}.
$$
The first condition is fulfilled since $\hat K$ is invertible.
To verify the second criterion, consider a vector in the kernel of the constraints, which has the form
$$
w = \begin{bmatrix}y \\ u\end{bmatrix} = \begin{bmatrix}\hat K^{-1} \hat M_3 u \\ u\end{bmatrix}, \qquad \forall u \neq 0.
$$
Now check if it belongs to the kernel of the other matrix:
$$
w^\top \begin{bmatrix}\hat M_1 & 0 \\ 0 & \beta\hat M_2\end{bmatrix} w =
u^\top \hat M_3^\top \hat K^{-\top} \hat M_1 \hat K^{-1} \hat M_3 u + \beta u^\top \hat M_2 u > 0,
$$
since $u^\top \hat M_2 u>0$ while the first term is non-negative.
Together with positive semi-definiteness of $\mathrm{blkdiag}(\hat M_1,~ \beta\hat M_2),$ this yields that $w$ is not in its kernel.

\hfill\end{proof}

The second modification to the AMEn algorithm is the unification of sizes and elimination of pressures.
Note that the last ($D$-th) TT block carries the spatial degrees of freedom,
and different components of $y_{\iota}$ have different numbers of spatial basis functions: $N_v$ for $\vv_h$, ${\bf u}_h$ and $\pp_h$, and $N_p$ for $p_h$ and $\mu_h$.
The Galerkin matrix $Y_\ell$ has always the same number of rows; we make it $N_t n_\xi^m N_v$, i.e. consistent with $\vv_h$, ${\bf u}_h$ and $\pp_h$.

The state and adjoint pressures are on the one hand more difficult, since they are of different sizes, and moreover make $\bm{K}$ itself saddle-point, but on the other hand they are only connected with the other components via a Kronecker-rank-1 matrix $\bm{B} = I_{N_t} \otimes I_{n_\xi}^{\otimes m} \otimes B$.
Therefore, we can compute the pressure components only within the $D$-th, the spatial, block.
When we proceed to $\ell<D$ (time and stochastic variables), we eliminate the pressures in a Gauss-Seidel fashion: having originally
$
\bm{K} = \begin{bmatrix}\bm{A} & \bm{B}^T \\ \bm{B} & 0\end{bmatrix},
$
we project only the velocity part $\hat K = Y_\ell^T \bm{A} Y_\ell$ in the left hand side,
and cast $Y_\ell^T \bm{B}^T p_h$ and $Y_\ell^T \bm{B}^T \mu_h$ to the right hand side.
The consequence is two-fold: the velocity part $\bm{A}$ is positive definite, hence so is $\hat K$, and all sizes are now consistent.
%
%
%
%

\subsection*{Preconditioning}
Even the reduced system \eqref{stokespa-red} can still be rather large: for $\ell=D$ for example, it is of size $(3 N_v + 2N_p) r_{m+1}$.
Therefore, we use GMRES with the matching preconditioner \cite{SW13,BSOS215}, based on approximating the Schur complement.
The reduced matrix admits a (straightforwardly verifiable) decomposition
\be
\label{shurfac}
\begin{bmatrix}
\hat M_1 & 0 & -\hat K^{*} \\
0 & \beta \hat M_2 & \hat M_3^T \\
-\hat K & \hat M_3 & 0 \\
\end{bmatrix} =
\begin{bmatrix}
I & * & * \\
 & I  \\
 &  & I \\
\end{bmatrix}
\begin{bmatrix}
 & & -\hat S  \\
 & \beta \hat M_2 & \hat M_3^T  \\
-\hat K & \hat M_3 & 0 \\
\end{bmatrix},
\e
where $\hat S = \hat K^* + \hat M_1 \hat K^{-1} \hat M_3 (\beta \hat M_2)^{-1} \hat M_3^T$.
Next, we use the second (anti-triangular) factor in  (\ref{shurfac}) as a preconditioner.
However, $\hat K$ and $\hat S$ must be approximated to make solving linear systems with them feasible.

The first matrix has the Kronecker form
$$
\hat K = \hat I \otimes \mathcal{L} + \hat C \otimes \mathcal{M} + \sum_{s=1}^{r_{m+1}} \hat D_s \otimes \mathcal{N}_s,
$$
where $\hat I$, $\hat C$ and $\hat D_s$ are $I_{N_t} \otimes I_{n_\xi}^{\otimes m}$, $C\tau^{-1} \otimes I_{n_\xi}^{\otimes m}$ and the parts of the convection matrix \eqref{eq:conv_tt}, respectively, projected via $Y_{m+2}$.
We approximate $\hat K$ by the following Sylvester operator:
$$
\tilde K = \hat I \otimes \left(\mathcal{L} + \sum_{s=1}^{r_{m+1}} \mathbb{E}\lambda(\hat D_s) \cdot \mathcal{N}_s \right) + \hat C \otimes \mathcal{M},
$$
where $\mathbb{E}\lambda(\hat D_s)$ is an average of the eigenvalues of $\hat D_s$ (it is symmetric, since so is the time-stochastic part of $\bm{N}$).
Now $\tilde K$ can be inverted by the Bartels-Stewart method, since $\hat C$ is small and can be easily Schur-factorized.

For the Schur complement $\hat S$ we use the matching factorisation,
\begin{equation}
\tilde S = \left(\hat K^* + \frac{1}{c} \hat M_1\right) \hat K^{-1} \left(\hat K + c \cdot \hat M_3 (\beta \hat M_2)^{-1} \hat M_3^T\right),
\label{eq:sc2}
\end{equation}
where $c>0$ is a normalization constant.
Traditionally it is proposed \cite{SW13,BSOS215} to take $c=\sqrt{\beta}$, and this is indeed an optimal choice for the distributed control.
For the boundary control, however, $\hat M_3 (\beta \hat M_2)^{-1} \hat M_3^T$ is significantly rank-deficient, and an optimal $c$ may differ.
We look for $c$ that minimizes the residual of the system $(\tilde S^{-1} \hat S) u = \tilde S^{-1} f$ with a random right-hand side $f$ after $5$ GMRES iterations.
Since it is difficult to differentiate the GMRES residual w.r.t. $c$, we employ the zero-order \emph{golden section} optimization algorithm \cite{Cheney}, initialized with an interval $\log_{10} c \in [\log_{10}\sqrt{\beta}-6, \log_{10}\sqrt{\beta}+6]$.
Fortunately, it is sufficient to perform this procedure in the first Picard iteration only, as the optimal $c$ does not seem to change in the latter.
To solve systems with $\tilde S$ efficiently, we also approximate the factors in brackets in \eqref{eq:sc2} by Sylvester matrices, similarly to $\tilde K$.

The full Algorithm \ref{alg} combines the standard Picard iteration for the Navier-Stokes equation \cite{ESW14} and the AMEn iteration in the block TT format \cite{BSOS215}.

\begin{algorithm}[t]
\caption{Block AMEn-Picard iteration for solving the stochastic inverse Navier-Stokes equations in the TT format}
\label{alg}
\begin{algorithmic}[1]
\State Initialize $\vv_h={\bf u}_h=\pp_h=0$, $p_h =\mu_h=0$.
\State Initialize the block TT format \eqref{eq:btt} with $\iota$ placed in $y^{(m+2)}$.
\For{iter=1,2,\ldots}
  \State Copy $\bar\vv_h = \vv_h, \bar p_h = p_h, \bar{\bf u}_h = {\bf u}_h, \bar\pp_h = \pp_h, \bar\mu_h = \mu_h$.
  \State Construct the convection matrices $\bm{N}[\bar\vv_h]$ and $\bm{W}[\bar\vv_h]$ \eqref{eq:conv_tt}.
  \State Construct and solve the projected system \eqref{stokespa-red} for $\ell=m+2$.
  \State Extract individual components $\vv_h=y_1,p_h=y_2,{\bf u}_h=y_3,\pp_h=y_4,\mu_h=y_5$.
  \If{$\|\vv_h-\bar\vv_h\|\le \eps \|\vv_h\|$, $\|p_h-\bar p_h\|\le \eps\|p_h\|$ and $\|{\bf u}_h-\bar {\bf u}_h\|\le \eps\|{\bf u}_h\|$}
    \State Stop.
  \EndIf
  \State Assemble $y_{\iota}$ from $y_1=\vv_h,y_2={\bf u}_h,y_3=\pp_h$ only.
  \State Apply the SVD to $y^{(m+2)}$ and move $\iota$ to $y^{(m+1)}$.
  \For{$\ell=m+1,m,\ldots,1,2,\ldots,m+1$}
    \State Construct and solve an analog of \eqref{stokespa-red} with fixed pressures,
      \begin{equation*}
      \begin{bmatrix}
      \hat M_1 & 0 & -\hat A^{*} \\
      0 & \beta \hat M_2 & \hat M_3^T \\
      -\hat A & \hat M_3 & 0 \\
      \end{bmatrix}
      y^{(\ell)}=
      \begin{bmatrix}
      -Y_\ell^T \bm{B}^T \mu_h \\
      0 \\
      \hat g - Y_\ell^T \bm{B}^T p_h \\
      \end{bmatrix}
      \end{equation*}
     \State Apply the SVD to $y^{(\ell)}$.
     \If{$\ell$ is increasing}
       \State Move $\iota$ to $y^{(\ell+1)}$.
     \Else
       \State Move $\iota$ to $y^{(\ell-1)}$.
     \EndIf
  \EndFor
\EndFor
\end{algorithmic}
\end{algorithm}

\section{Numerical results}
\label{experiments}
We implemented the computational codes on the basis of the Matlab TT-Toolbox \cite{tt-toolbox} and
the IFISS 3.3 toolbox \cite{ifiss3.4} and run on one core of the \texttt{otto} cluster at MPI Magdeburg,
an Intel Xeon X5650 @ 2.67GHz.
The main focus of this paper is the numerical scheme, hence we vary the model and discretization parameters one by one.
The default parameters, unless otherwise stated, are presented in Table \ref{tab:def}.

\begin{table}[h!]
\centering
\caption{Default model and discretization parameters}
\label{tab:def}
\begin{tabular}{ccccccccc}
$\eps$    & $\nu$ & $\beta$  &  $N_t$    & $T$  & $h$      & $n_\xi$ & $\gamma$ & $m$  \\
\hline
$10^{-4}$ & $0.1$ & $10^{-2}$&  $2^{10}$ & $30$ & $2^{-3}$ & $8$       & $4$      & $4$ \\
\end{tabular}
\end{table}

The natural indicators of computational complexity are the CPU time and the number of iterations, while the storage complexity of the TT representation is governed by the TT ranks.
We show the maximal TT rank and the memory saving ratio of the TT format, compared to the full tensor representation, i.e.
$$
\mbox{MemR} = \frac{N_t r_1 + \sum_{\ell=1}^m r_{\ell}r_{\ell+1} n_\xi + r_{m+1} N_v}{N_t n_\xi^m N_v}.
$$
The smaller the TT ranks, the smaller is this ratio, which indicates the storage savings.

As a quantity of interest, we compute the average squared norm of vorticity 
$\|\nabla \times \vv\|^2$, where the norm is taken in $L^2([0,T]) \times L^2(\Omega) \times L^2(\mathcal{D})$, i.e. the doubled first term of the functional \eqref{J}.

The largest problem was solved with $m=8,\; N_t=1024$ and $h=2^{-4};$ this value of $h$ gives $N_v=23042$ and $N_p=2945.$
Thus, the number of unknowns in the full KKT problem would be $(3N_v+2N_p)\cdot {n_\xi}^m\cdot N_t = 1.29\times 10^{15}.$
Note that the number of TT elements is $(3N_v+2N_p)r +  m{n_\xi}r^2 + N_tr;$
but since the first term is dominating,  it suffices to estimate the whole cost with the first term.
The maximum ranks are $70$ for the distributed control  and $120$  for the boundary control cases (see Figures \ref{fig:m} and \ref{fig:gamma_m_bnd} right, respectively).
This gives roughly $5$ and $9$ millions of unknowns.

\subsection{Distributed control}
\subsubsection{Discretization parameters}
In the first series of tests we consider the distributed control, applied in the right hand side of \eqref{kkt_cond1}.
We start with verifying correctness of the discretization schemes.
We show the results for different spatial mesh sizes in Fig. \ref{fig:nc}, and for different time step sizes in Fig. \ref{fig:nt}.

The number of spatial degrees of freedom grows quadratically with the number of steps in each dimension.
Since we use a direct solver for the spatial Stokes-like problems in \eqref{eq:sc2}, the computational complexity is higher;
in particular, it grows proportionally to $h^{-2.5}$ in Fig. \ref{fig:nc}.
To estimate the discretization error, we take the vorticity from the finest grid $h=2^{-4}$ as the reference value $\|\nabla \times \vv_*\|$,
and compare it with the values on coarser grids.
We observe the convergence order $h^{1.5}$,
which is reasonable for the backstep domain, 
containing a corner with an angle 
$3\pi/2;$  see e.g., \cite{babuska-regularity-1988,babuska-hp-1988}.

In order to keep the same number of stochastic variables 
throughout the spatial mesh test, we restrict it to $m=2$.
This is necessary since the higher KLE components 
in \eqref{eq:kle_in} are not resolved on the coarsest grid in 
this test,
and the problem would be in a severely pre-asymptotic regime 
otherwise.

The TT ranks grow very moderately with the grid sizes, with the asymptotic dependence close to logarithmic.
Since the full tensor storage, proportional to both $N_v$ and $N_t$ grows much faster, this leads actually to a decrease of the memory ratio with both spatial (Fig. \ref{fig:nc}) and time (Fig. \ref{fig:nt}) grid refinement.

The CPU time versus the time grid size demonstrates two regions.
For smaller numbers of time steps, solution of the spatial problems is dominating in the total complexity, while the TT ranks grow logarithmically.
This leads to a logarithmic growth of the CPU time as well.
When the number of time steps is large, the temporal problems become the most time consuming, and the CPU time starts
to grow linearly with $N_t$ (the actual estimated order is $1.2$ due to the additional logarithmic growth of the ranks).

The implicit Euler time discretization manifests an expected convergence of the first order.
Here the reference vorticity $\|\nabla \times \vv_\star\|$ is computed at the $N_t=2^{15}$ grid.

%



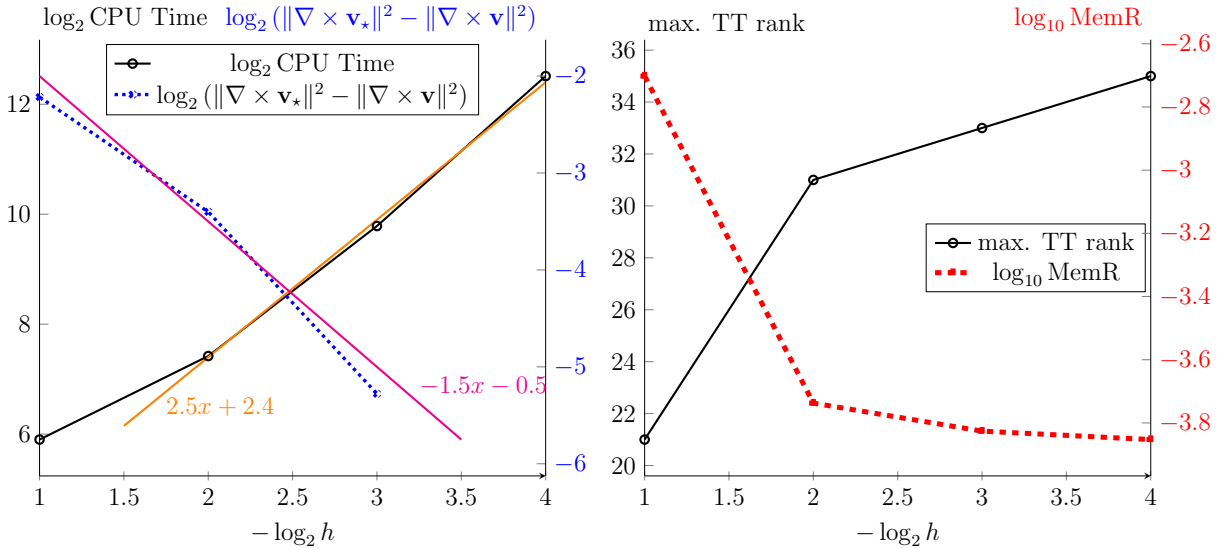
\begin{figure}[h!]
\caption{CPU time and convergence w.r.t. grid refinement (left) and TT rank and memory reduction ratio (right) for different spatial mesh sizes. Change of defaults: $m=2$.}
\label{fig:nc}
\resizebox{0.98\linewidth}{!}{
\begin{tikzpicture}
  \begin{axis}[%
  xmode=normal,
  ymode=normal,
  xmin=1,xmax=4,
  xlabel={$-\log_2 h$},
  legend style={at={(0.50,0.99)},anchor=north},
  ylabel=$\log_2\mbox{CPU Time}$, axis y line*=left, y label style={at={(-0.01,1.0)},anchor=south west,rotate=-90,color=black},
  cycle list name=stokescontrol,
  y filter/.code={\pgfmathparse{log10(\pgfmathresult)/log10(2.0)}\pgfmathresult},
  ]

  \addplot+[] coordinates {
   (1, 59.7220   )
   (2, 171.1626  )
   (3, 881.3946  )
   (4, 5.8359e+03)}; \addlegendentry{$\log_2\mbox{CPU Time}$}
   \addplot+[] coordinates {(0,1)}; \addlegendentry{$\log_2\left(\|\nabla \times \vv_{\star}\|^2 - \|\nabla \times \vv\|^2\right)$};

   \addplot+[line width=1pt,color=orange,solid,no marks,domain=1.5:4] {2^(2.5*x+2.4)};
   \node at (axis cs:1.7,6.5) [anchor=west,color=orange] {$2.5x+2.4$};
  \end{axis}
  \begin{axis}[%
  xmode=normal,
  ymode=normal,
  xmin=1,xmax=4,
  legend style={at={(0.01,0.01)},anchor=south west},
  ylabel=$\log_2\left(\|\nabla \times \vv_{\star}\|^2 - \|\nabla \times \vv\|^2\right)$, axis y line*=right, y label style={at={(1.0,1.0)},anchor=south east,rotate=-90,color=blue},every y tick label/.style={blue},
  axis x line=none,
  cycle list name=stokescontrol,
  y filter/.code={\pgfmathparse{log10(12.097289795 - \pgfmathresult)/log10(2.0)}\pgfmathresult},
  ]
  \pgfplotsset{cycle list shift=1};
  \addplot+[] coordinates {
   (1, 11.882014339)
   (2, 12.002581938)
   (3, 12.071493602)};

   \addplot+[line width=1pt,color=magenta,solid,no marks,domain=1:3.5] {12.097289795 - 2^(-1.5*x-0.5)};
   \node at (axis cs:3.2,-5.2) [anchor=west,color=magenta] {$-1.5x-0.5$};
  \end{axis}
\end{tikzpicture}
\begin{tikzpicture}
  \begin{axis}[%
  xmode=normal,
  ymode=normal,
  xmin=1,xmax=4,
  legend style={at={(0.99,0.50)},anchor=east},
  ylabel=max. TT rank, axis y line*=left, y label style={at={(-0.01,1.0)},anchor=south west,rotate=-90},
  xlabel={$-\log_2 h$},
  cycle list name=stokescontrol
  ]
  \addplot+[] coordinates {
   (1, 21)
   (2, 31)
   (3, 33)
   (4, 35)}; \addlegendentry{max. TT rank};
  \pgfplotsset{cycle list shift=1};
  \addplot+[] coordinates{(0,0)}; \addlegendentry{$\log_{10}\mbox{MemR}$}; 
  \end{axis}
  \begin{axis}[%
  xmode=normal,
  ymode=normal,
  legend style={at={(0.01,0.01)},anchor=south west},
  ylabel=$\log_{10}\mbox{MemR}$, axis y line*=right, y label style={at={(1.0,1.0)},anchor=south east,rotate=-90,color=red},every y tick label/.style={red},
  axis x line=none,
  cycle list name=stokescontrol,
  y filter/.code={\pgfmathparse{log10(\pgfmathresult)}\pgfmathresult},
  ]
  \pgfplotsset{cycle list shift=2};
  \addplot+[] coordinates {
   (1, 1.9847e-03)
   (2, 1.8313e-04)
   (3, 1.4929e-04)
   (4, 1.4039e-04)};
  \end{axis}
\end{tikzpicture}
}
\end{figure}

~

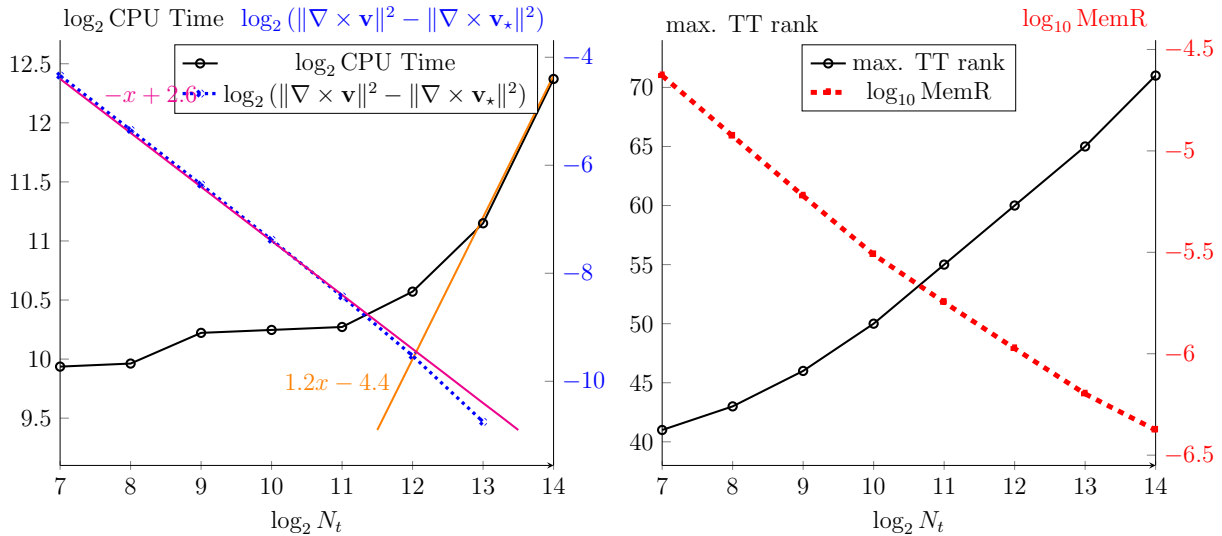
\begin{figure}[h!]
\caption{CPU time and convergence w.r.t. grid refinement (left) and TT rank and memory reduction ratio (right) for different numbers of time steps.}
\label{fig:nt}
\resizebox{0.98\linewidth}{!}{
\begin{tikzpicture}
  \begin{axis}[%
  xmode=normal,
  ymode=normal,
  xmin=7,xmax=14,
  xlabel={$\log_2 N_t$},
  legend style={at={(0.60,0.99)},anchor=north},
  ylabel=$\log_2\mbox{CPU Time}$, axis y line*=left, y label style={at={(-0.01,1.0)},anchor=south west,rotate=-90,color=black},
  cycle list name=stokescontrol,
  y filter/.code={\pgfmathparse{log10(\pgfmathresult)/log10(2)}\pgfmathresult},
  ]

  \addplot+[] coordinates {
   (7 , 978.8806  )
   (8 , 996.5336  )
   (9 , 1.1932e+03)
   (10, 1.2139e+03)
   (11, 1.2351e+03)
   (12, 1.5197e+03)
   (13, 2.2695e+03)
   (14, 5.2919e+03)}; \addlegendentry{$\log_2\mbox{CPU Time}$};
   \addplot+[] coordinates{(0,1)}; \addlegendentry{$\log_2\left(\|\nabla \times\vv\|^2 - \|\nabla \times\vv_{\star}\|^2\right)$}; 

   \addplot+[line width=1pt,color=orange,solid,no marks,domain=11.5:14] {2^(1.2*x-4.4)};
   \node at (axis cs:11.8,9.8) [anchor=east,color=orange] {$1.2x-4.4$};
  \end{axis}
  \begin{axis}[%
  xmode=normal,
  ymode=normal,
  xmin=7,xmax=14,
  legend style={at={(0.01,0.01)},anchor=south west},
  ylabel=$\log_2\left(\|\nabla \times\vv\|^2 - \|\nabla \times\vv_{\star}\|^2\right)$, axis y line*=right, y label style={at={(1.0,1.0)},anchor=south east,rotate=-90,color=blue},every y tick label/.style={blue},
  axis x line=none,
  cycle list name=stokescontrol,
  y filter/.code={\pgfmathparse{log10(\pgfmathresult - 12.066229586)/log10(2.0)}\pgfmathresult},
  ]
  \pgfplotsset{cycle list shift=1};
  \addplot+[] coordinates {
   (7 ,    12.115638555)
   (8 ,    12.090841716)
   (9 ,    12.078438998)
   (10,    12.072237403)
   (11,    12.069136609)
   (12,    12.067586210)
   (13,    12.066811001)};

   \addplot+[line width=1pt,color=magenta,solid,no marks,domain=7:13.5] {12.066229586 + 2^(-1*x+2.6)};
   \node at (axis cs:7.5,-4.7) [anchor=west,color=magenta] {$-x+2.6$};
  \end{axis}
\end{tikzpicture}
\begin{tikzpicture}
  \begin{axis}[%
  xmode=normal,
  ymode=normal,
  xmin=7,xmax=14,
  legend style={at={(0.50,0.99)},anchor=north},
  ylabel=max. TT rank, axis y line*=left, y label style={at={(-0.01,1.0)},anchor=south west,rotate=-90},
  xlabel={$\log_{2}N_t$},
  cycle list name=stokescontrol
  ]
  \addplot+[] coordinates {
   (7 , 41)
   (8 , 43)
   (9 , 46)
   (10, 50)
   (11, 55)
   (12, 60)
   (13, 65)
   (14, 71)}; \addlegendentry{max. TT rank};
   \pgfplotsset{cycle list shift=1};
   \addplot+[] coordinates{(0,0)}; \addlegendentry{$\log_{10}\mbox{MemR}$}; 
  \end{axis}
  \begin{axis}[%
  xmode=normal,
  ymode=normal,
  legend style={at={(0.01,0.01)},anchor=south west},
  ylabel=$\log_{10}\mbox{MemR}$, axis y line*=right, y label style={at={(1.0,1.0)},anchor=south east,rotate=-90,color=red},every y tick label/.style={red},
  axis x line=none,
  cycle list name=stokescontrol,
  y filter/.code={\pgfmathparse{log10(\pgfmathresult)}\pgfmathresult},
  ]
  \pgfplotsset{cycle list shift=2};
  \addplot+[] coordinates {
   (7 , 2.3510e-05)
   (8 , 1.1838e-05)
   (9 , 6.0018e-06)
   (10, 3.0838e-06)
   (11, 1.7945e-06)
   (12, 1.0611e-06)
   (13, 6.3420e-07)
   (14, 4.2072e-07)};
  \end{axis}
\end{tikzpicture}
}
\end{figure}

\subsubsection{Robustness with respect to model parameters: viscosity, KLE decay rate and dimension}
Having checked the discretization schemes, we can vary the other model parameters and see how the performance and the quantity of interest behave.
In Fig. \ref{fig:nu} we vary the viscosity from $10^{-1}$ to $10^{-4}$ and track the computational time and the number of Picard iterations (left), as well as the storage indicators and the quantity of interest (right).
We see that the CPU time grows only logarithmically with the Reynolds number, and $3$ Picard iterations are enough for the nonlinear system to converge.
This is also the case for other experiments with a \emph{sufficient} control, i.e. with the distributed control and small enough $\beta$, that was observed previously \cite{SSNS17} for a deterministic problem as well.
The TT rank (and hence memory ratio) and the vorticity remain at almost the same level (compared to variation of other parameters and other tests), which indicates that the minimal-vorticity solution is similar to the Stokes flow, fully defined by the domain and boundary conditions, rather than the viscosity.



\begin{figure}[h!]
\caption{CPU time and the number of Picard iterations (left) and TT 
rank, memory reduction ratio and the vorticity (right) 
for different kinematic viscosities.}
\label{fig:nu}
\resizebox{0.98\linewidth}{!}{
\begin{tikzpicture}
  \begin{axis}[%
  xmode=normal,
  ymode=normal,
  xmin=1,xmax=4,
  legend style={at={(0.99,0.01)},anchor=south east},
  xlabel={$-\log_{10}\nu$},
  ylabel=CPU Time, axis y line*=left, y label style={at={(-0.01,1.0)},anchor=south west,rotate=-90,color=black},
  cycle list name=stokescontrol
  ]
  \addplot+[] coordinates {
   (1,              1.1962e+03 )
   (log10(50.0),    1.4576e+03 )
   (2,              1.6360e+03 )
   (log10(500.0),   2.0184e+03 )
   (3,              2.1673e+03 )
   (log10(5000.0),  2.4554e+03 )
   (4,              2.5997e+03 )}; \addlegendentry{CPU Time};
   \addplot+[] coordinates {(0,0)}; \addlegendentry{Picard iterations};
  \end{axis}
  \begin{axis}[%
  xmode=normal,
  ymode=normal,
  ymin=2,ymax=4,
  ylabel=Picard iterations, axis y line*=right, y label style={at={(1.0,1.0)},anchor=south east,rotate=-90,color=blue},every y tick label/.style={blue},
  axis x line=none,
  cycle list name=stokescontrol
  ]
  \pgfplotsset{cycle list shift=1};
  \addplot+[] coordinates {
   (1,              3 )
   (log10(50.0),    3 )
   (2,              3 )
   (log10(500.0),   3 )
   (3,              3 )
   (log10(5000.0),  3 )
   (4,              3 )};
  \end{axis}
\end{tikzpicture}
\begin{tikzpicture}
  \begin{axis}[%
  xmode=normal,
  ymode=normal,
  ymin=40,ymax=50,
  legend style={at={(0.01,0.70)},anchor=west},
  ylabel=max. TT rank, axis y line*=left, y label style={at={(-0.01,1.0)},anchor=south west,rotate=-90},
  xlabel={$-\log_{10}\nu$},
  cycle list name=stokescontrol
  ]
  \addplot+[] coordinates {
   (1,              50 )
   (log10(50.0),    49 )
   (2,              49 )
   (log10(500.0),   49 )
   (3,              48 )
   (log10(5000.0),  48 )
   (4,              47 )}; \addlegendentry{max. TT rank};
   \pgfplotsset{cycle list shift=1};
   \addplot+[] coordinates {(0,0)}; \addlegendentry{$\log_{10}\mbox{MemR}$};
   \addplot+[] coordinates {(0,0)}; \addlegendentry{$\|\nabla \times \vv\|^2$};
  \end{axis}
  \begin{axis}[%
  xmode=normal,
  ymode=normal,
  ymin=-6,ymax=-5,
  legend style={at={(0.01,0.01)},anchor=south west},
  ylabel=$\log_{10}\mbox{MemR}$, axis y line*=right, y label style={at={(1.0,1.0)},anchor=south east,rotate=-90,color=red},every y tick label/.style={red},
  axis x line=none,
  cycle list name=stokescontrol,
  y filter/.code={\pgfmathparse{log10(\pgfmathresult)}\pgfmathresult},
  ]
  \pgfplotsset{cycle list shift=2};
  \addplot+[] coordinates {
   (1,              3.0838e-06 )
   (log10(50.0),    3.1408e-06 )
   (2,              3.1408e-06 )
   (log10(500.0),   3.1408e-06 )
   (3,              3.1408e-06 )
   (log10(5000.0),  3.1408e-06 )
   (4,              3.1408e-06 )};
  \end{axis}
  \begin{axis}[%
  xmode=normal,
  ymode=normal,
  ymin=12,ymax=12.3,
  legend style={at={(0.01,0.01)},anchor=south west},
  ylabel=$\|\nabla \times \vv\|^2$, axis y line*=right, y label style={at={(1.25,1.0)},anchor=south east,rotate=-90,color=green!50!black},
  axis x line=none,
  cycle list name=stokescontrol
  ]
  \pgfplotsset{cycle list shift=3};
  \pgfplotsset{every outer y axis line/.style={xshift=1cm}, every tick/.style={xshift=1cm}, every y tick label/.style={xshift=1cm,color=green!50!black} }
  \addplot+[] coordinates {
   (1,              12.072237403 )
   (log10(50.0),    12.072165240 )
   (2,              12.072162448 )
   (log10(500.0),   12.072161554 )
   (3,              12.072161527 )
   (log10(5000.0),  12.072161519 )
   (4,              12.072161519 )};
  \end{axis}
\end{tikzpicture}
}
\end{figure}
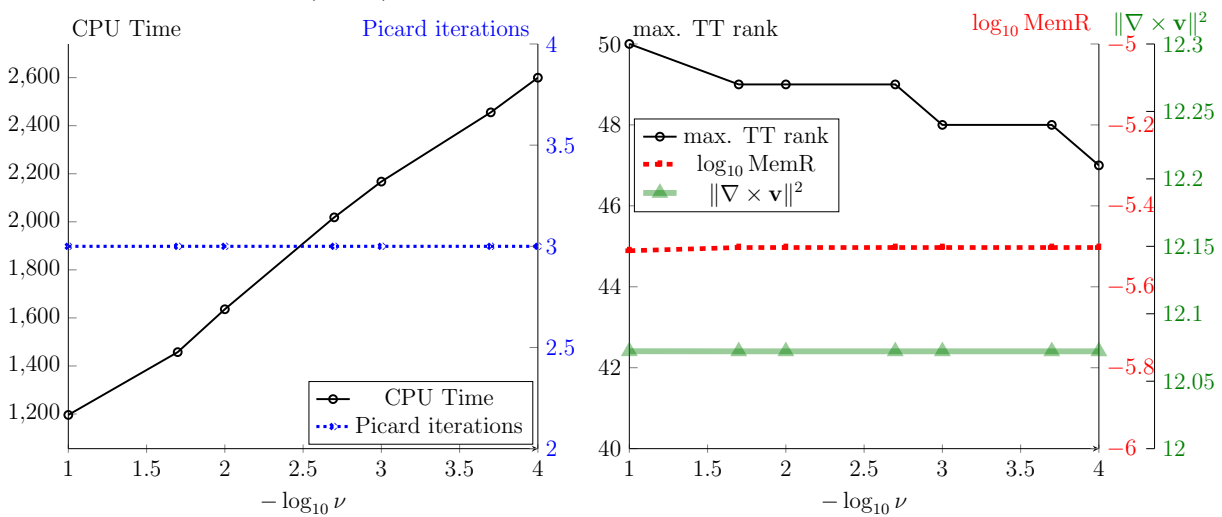

There are two parameters defining the ``randomness'' of the problem: the decay rate of the KLE coefficients $\gamma$, and the total number of KLE components after truncation $m$, see \eqref{eq:kle_in_new}.
The smaller is $\gamma$, the slower is the decay, and hence the larger is the number of ``effective'' dimensions.
This makes the problem more difficult to solve, as we observe in Fig. \ref{fig:gamma}.




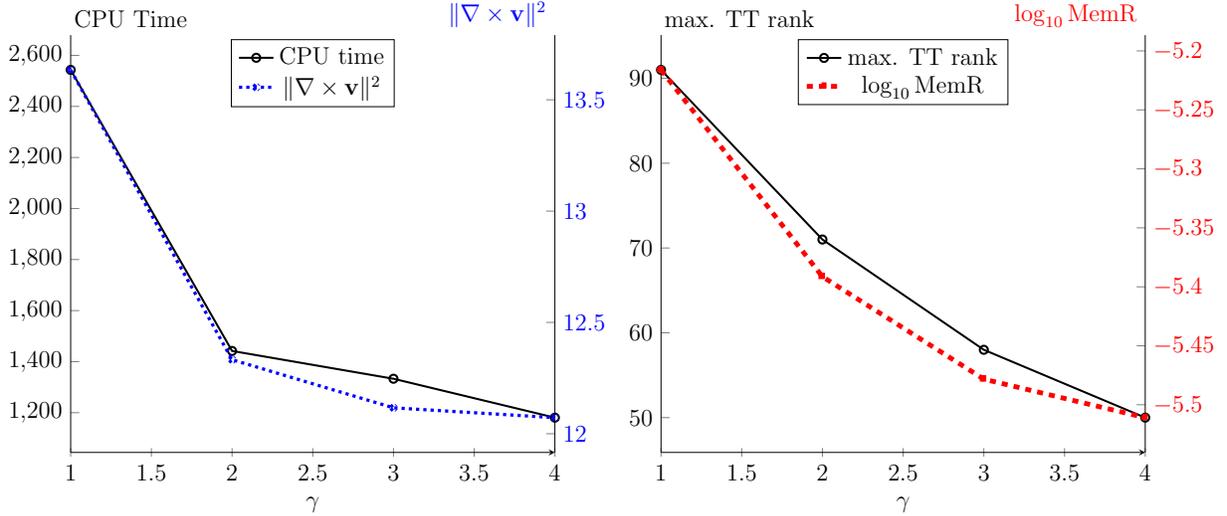
\begin{figure}[h!]
\caption{CPU time and the vorticity (left) and TT rank and
the memory reduction ratio (right) for different KLE decay rates $\gamma.$}
\label{fig:gamma}
\resizebox{0.98\linewidth}{!}{
\begin{tikzpicture}
  \begin{axis}[%
  xmode=normal,
  ymode=normal,
  xmin=1,xmax=4,
  xlabel={$\gamma$},
  legend style={at={(0.50,0.99)},anchor=north},
  ylabel=CPU Time, axis y line*=left, y label style={at={(-0.01,1.0)},anchor=south west,rotate=-90,color=black},
  cycle list name=stokescontrol,
  ]
  \addplot+[] coordinates {
   (1, 2.5431e+03)
   (2, 1.4419e+03)
   (3, 1.3330e+03)
   (4, 1.1806e+03)}; \addlegendentry{CPU time};
   \addplot+[] coordinates{(0,0)}; \addlegendentry{$\|\nabla \times\vv\|^2$}; 
  \end{axis}
  \begin{axis}[%
  xmode=normal,
  ymode=normal,
  legend style={at={(0.01,0.01)},anchor=south west},
  ylabel=$\|\nabla \times\vv\|^2$, axis y line*=right, y label style={at={(1.0,1.0)},anchor=south east,rotate=-90,color=blue},every y tick label/.style={blue},
  axis x line=none,
  cycle list name=stokescontrol,
  ]
  \pgfplotsset{cycle list shift=1};
  \addplot+[] coordinates {
   (1, 13.634238558)
   (2, 12.332533631)
   (3, 12.116461259)
   (4, 12.072237403)};
  \end{axis}
\end{tikzpicture}
\begin{tikzpicture}
  \begin{axis}[%
  xmode=normal,
  ymode=normal,
  xmin=1,xmax=4,
  legend style={at={(0.50,0.99)},anchor=north},
  ylabel=max. TT rank, axis y line*=left, y label style={at={(-0.01,1.0)},anchor=south west,rotate=-90},
  xlabel={$\gamma$},
  cycle list name=stokescontrol
  ]
  \addplot+[] coordinates {
   (1, 91)
   (2, 71)
   (3, 58)
   (4, 50)}; \addlegendentry{max. TT rank};
   \pgfplotsset{cycle list shift=1};
   \addplot+[] coordinates{(0,0)}; \addlegendentry{$\log_{10}\mbox{MemR}$}; 
  \end{axis}
  \begin{axis}[%
  xmode=normal,
  ymode=normal,
  legend style={at={(0.01,0.01)},anchor=south west},
  ylabel=$\log_{10}\mbox{MemR}$, axis y line*=right, y label style={at={(1.0,1.0)},anchor=south east,rotate=-90,color=red},every y tick label/.style={red},
  axis x line=none,
  cycle list name=stokescontrol,
  y filter/.code={\pgfmathparse{log10(\pgfmathresult)}\pgfmathresult},
  ]
  \pgfplotsset{cycle list shift=2};
  \addplot+[] coordinates {
   (1, 6.0802e-06)
   (2, 4.0608e-06)
   (3, 3.3245e-06)
   (4, 3.0838e-06)};
  \end{axis}
\end{tikzpicture}
}
\end{figure}


Increasing the number of stochastic variables faces the same issue as decreasing the number of spatial grid points: a coarse spatial grid may not resolve the latter KLE terms.
In order to vary $m$ up to $8$, we employ the spatial grid with $h=2^{-4}$.
The results are shown in Fig. \ref{fig:m}.
The CPU times grows milder than linearly with $m$ since (a) the most time-consuming stage is still the solution of the spatial problems, and (b) the model saturates with larger $m$ due to the decaying KLE series.
The latter phenomenon is also illustrated by convergence of the vorticity norm with $m$.

This example illustrates how the TT decomposition gets rid of the curse of dimensionality.
The total number of degrees of freedom in a full tensor would grow exponentially in $m$, while the TT ranks (and hence the TT storage) grow milder than linearly.
This leads to an exponential decay of the memory reduction ratio, which reaches 9 orders of magnitude for the largest dimension.

\begin{figure}[h!]
\caption{CPU time and the vorticity (left) and TT rank and the memory reduction ratio (right) for different numbers of stochastic variables $m$. Change of defaults: $h=2^{-4}$.}
\label{fig:m}
\resizebox{0.98\linewidth}{!}{
\begin{tikzpicture}
  \begin{axis}[%
  xmode=normal,
  ymode=normal,
  xmin=1,xmax=8,
  ytick={4,5,6,7,8,9,10},
  yticklabels={4000,5000,6000,7000,8000,9000,10000},
  xlabel={$m$},
  legend style={at={(0.40,0.01)},anchor=south},
  ylabel=CPU Time, axis y line*=left, y label style={at={(-0.01,1.0)},anchor=south west,rotate=-90,color=black},
  cycle list name=stokescontrol,
  y filter/.code={\pgfmathparse{0.001*\pgfmathresult}\pgfmathresult},
  ]
  \addplot+[] coordinates {
   (1, 4.3112e+03)
   (2, 6.5982e+03)
   (3, 7.4130e+03)
   (4, 7.6319e+03)
   (5, 8.7223e+03)
   (6, 8.7907e+03)
   (7, 9.4490e+03)
   (8, 9.8337e+03)}; \addlegendentry{CPU Time};
   \addplot+[] coordinates{(0,0)}; \addlegendentry{$\log_{10}\left(\|\nabla \times\vv_{\star}\|^2 - \|\nabla \times\vv\|^2\right)$}; 
  \end{axis}
  \begin{axis}[%
  xmode=normal,
  ymode=normal,
  xmin=1,xmax=8,
  legend style={at={(0.01,0.01)},anchor=south west},
  ylabel=$\log_{10}\left(\|\nabla \times\vv_{\star}\|^2 - \|\nabla \times\vv\|^2\right)$, axis y line*=right, y label style={at={(1.0,1.0)},anchor=south east,rotate=-90,color=blue},every y tick label/.style={blue},
  axis x line=none,
  cycle list name=stokescontrol,
  y filter/.code={\pgfmathparse{log10(\pgfmathresult)}\pgfmathresult},
  ]
  \pgfplotsset{cycle list shift=1};
  \addplot+[] coordinates {
   (1,    1.3570e-02)
   (2,    8.4398e-04)
   (3,    1.2053e-04)
   (4,    2.6202e-05)
   (5,    7.0580e-06)
   (6,    2.0170e-06)
   (7,    4.6300e-07)};
  \end{axis}
\end{tikzpicture}
\begin{tikzpicture}
  \begin{axis}[%
  xmode=normal,
  ymode=normal,
  xmin=1,xmax=8,
  legend style={at={(0.50,0.01)},anchor=south},
  ylabel=max. TT rank, axis y line*=left, y label style={at={(-0.01,1.0)},anchor=south west,rotate=-90},
  xlabel={$m$},
  cycle list name=stokescontrol
  ]
  \addplot+[] coordinates {
   (1, 24)
   (2, 36)
   (3, 46)
   (4, 54)
   (5, 59)
   (6, 64)
   (7, 67)
   (8, 71)};
   \pgfplotsset{cycle list shift=1}; \addlegendentry{max. TT rank};
   \addplot+[] coordinates{(0,0)}; \addlegendentry{$\log_{10}\mbox{MemR}$}; 
  \end{axis}
  \begin{axis}[%
  xmode=normal,
  ymode=normal,
  legend style={at={(0.01,0.01)},anchor=south west},
  ylabel=$\log_{10}\mbox{MemR}$, axis y line*=right, y label style={at={(1.0,1.0)},anchor=south east,rotate=-90,color=red},every y tick label/.style={red},
  axis x line=none,
  cycle list name=stokescontrol,
  y filter/.code={\pgfmathparse{log10(\pgfmathresult)}\pgfmathresult},
  ]
  \pgfplotsset{cycle list shift=2};
  \addplot+[] coordinates {
   (1, 8.7738e-04)
   (2, 1.4039e-04)
   (3, 2.1397e-05)
   (4, 2.9180e-06)
   (5, 3.9534e-07)
   (6, 5.3436e-08)
   (7, 7.6472e-09)
   (8, 1.0758e-09)};
  \end{axis}
\end{tikzpicture}
}
\end{figure}
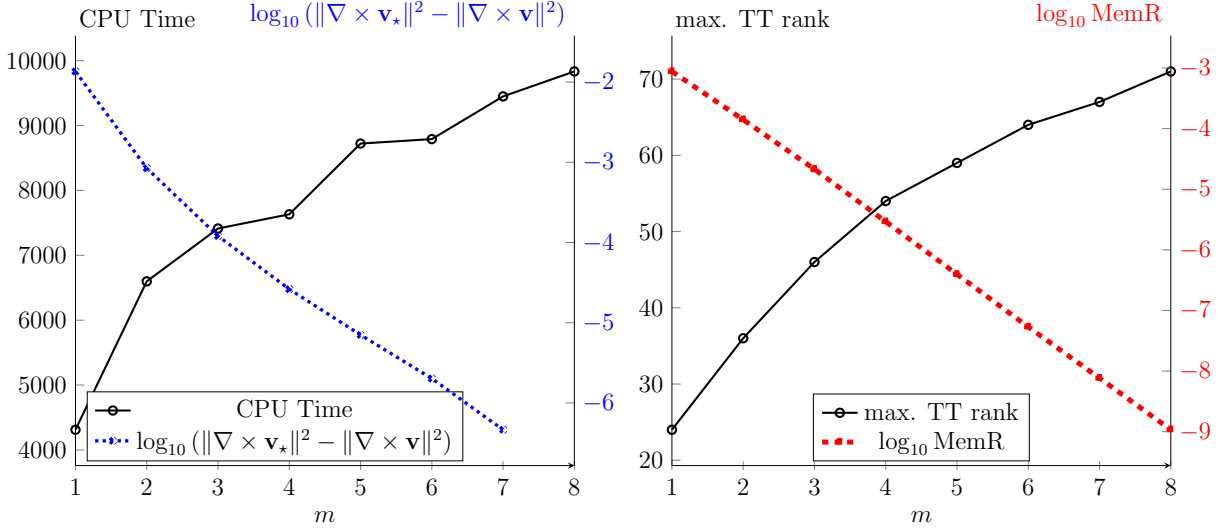

\subsubsection{Influence of the regularization}

In Fig. \ref{fig:beta}, we investigate how the model and the scheme depend on the regularization parameter $\beta$.
Setting $\beta \gg 1$ means solving almost an uncontrolled problem, while $\beta<1$ corresponds to a control with a certain ``power''.
We see that the CPU time and TT ranks grow significantly with $\beta$, when the model switches from effectively Stokes to an essentially nonlinear Navier-Stokes regime.
This is also reflected by a larger number of Picard iterations, needed for the system to converge.
The norm of vorticity starts growing fast when $\beta$ exceeds $1$.
On the other hand, for very large $\beta>10$ we observe a slight decrease of the CPU time due to a smaller number of the local GMRES iterations: the KKT matrix becomes anti-diagonally dominant, and the preconditioner \eqref{eq:sc2} becomes more efficient.

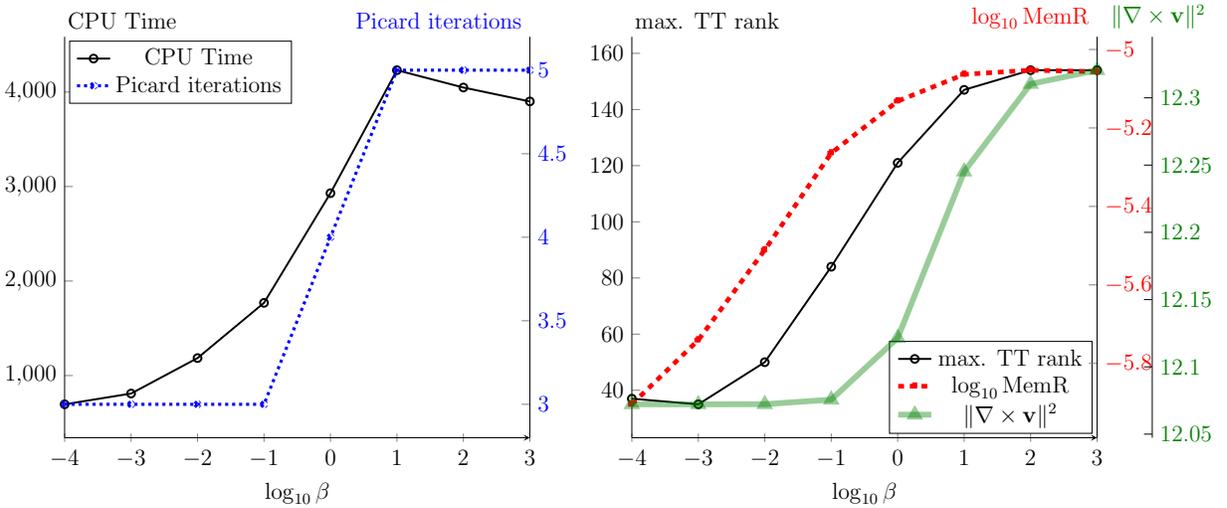
\begin{figure}[h!]
\caption{CPU time and the number of Picard iterations (left) and TT rank, memory reduction ratio and vorticity (right) for different regularization parameters.}
\label{fig:beta}
\resizebox{0.98\linewidth}{!}{
\begin{tikzpicture}
  \begin{axis}[%
  xmode=normal,
  ymode=normal,
  xmin=-4,xmax=3,
  xlabel={$\log_{10}\beta$},
  legend style={at={(0.01,0.99)},anchor=north west},
  ylabel=CPU Time, axis y line*=left, y label style={at={(-0.01,1.0)},anchor=south west,rotate=-90,color=black},
  cycle list name=stokescontrol
  ]

  \addplot+[] coordinates {
   (-4, 695.6047   )
   (-3, 809.5868   )
   (-2, 1.1849e+03 )
   (-1, 1.7693e+03 )
   (0,  2.9298e+03)
   (1,  4.2312e+03)
   (2,  4.0478e+03)
   (3,  3.9007e+03)}; \addlegendentry{CPU Time};
   \addplot+[] coordinates{(5,0)}; \addlegendentry{Picard iterations}; 
  \end{axis}
  \begin{axis}[%
  xmode=normal,
  ymode=normal,
  legend style={at={(0.01,0.01)},anchor=south west},
  ylabel=Picard iterations, axis y line*=right, y label style={at={(1.0,1.0)},anchor=south east,rotate=-90,color=blue},every y tick label/.style={blue},
  axis x line=none,
  cycle list name=stokescontrol
  ]
  \pgfplotsset{cycle list shift=1};
  \addplot+[] coordinates {
   (-4, 3 )
   (-3, 3 )
   (-2, 3 )
   (-1, 3 )
   (0,  4)
   (1,  5)
   (2,  5)
   (3,  5)};
  \end{axis}
\end{tikzpicture}
\begin{tikzpicture}
  \begin{axis}[%
  xmode=normal,
  ymode=normal,
  xmin=-4,xmax=3,
  legend style={at={(0.99,0.01)},anchor=south east},
  ylabel=max. TT rank, axis y line*=left, y label style={at={(-0.01,1.0)},anchor=south west,rotate=-90},
  xlabel={$\log_{10}\beta$},
  cycle list name=stokescontrol
  ]
  \addplot+[] coordinates {
   (-4, 37  )
   (-3, 35  )
   (-2, 50  )
   (-1, 84  )
   (0,  121)
   (1,  147)
   (2,  154)
   (3,  154)}; \addlegendentry{max. TT rank};
   \pgfplotsset{cycle list shift=1};
   \addplot+[] coordinates{(5,0)}; \addlegendentry{$\log_{10}\mbox{MemR}$}; 
   \addplot+[] coordinates{(5,0)}; \addlegendentry{$\|\nabla \times \vv\|^2$}; 
  \end{axis}
  \begin{axis}[%
  xmode=normal,
  ymode=normal,
  legend style={at={(0.01,0.01)},anchor=south west},
  ylabel=$\log_{10}\mbox{MemR}$, axis y line*=right, y label style={at={(1.0,1.0)},anchor=south east,rotate=-90,color=red},every y tick label/.style={red},
  axis x line=none,
  cycle list name=stokescontrol,
  y filter/.code={\pgfmathparse{log10(\pgfmathresult)}\pgfmathresult},
  ]
  \pgfplotsset{cycle list shift=2};
  \addplot+[] coordinates {
   (-4, 1.2465e-06 )
   (-3, 1.8179e-06 )
   (-2, 3.0838e-06 )
   (-1, 5.4453e-06 )
   (0,  7.3846e-06)
   (1,  8.6410e-06)
   (2,  8.8555e-06)
   (3,  8.7742e-06)};
  \end{axis}
  \begin{axis}[%
  xmode=normal,
  ymode=normal,
  legend style={at={(0.01,0.01)},anchor=south west},
  ylabel=$\|\nabla \times \vv\|^2$, axis y line*=right, y label style={at={(1.25,1.0)},anchor=south east,rotate=-90,color=green!50!black},
  axis x line=none,
  cycle list name=stokescontrol
  ]
  \pgfplotsset{cycle list shift=3};
  \pgfplotsset{every outer y axis line/.style={xshift=1cm}, every tick/.style={xshift=1cm}, every y tick label/.style={xshift=1cm,color=green!50!black} }
  \addplot+[] coordinates {
   (-4, 12.072155180 )
   (-3, 12.072156141 )
   (-2, 12.072237403 )
   (-1, 12.075644954 )
   (0,  12.121485198)
   (1,  12.245154757)
   (2,  12.310294779)
   (3,  12.320554819)};
  \end{axis}
\end{tikzpicture}
}
\end{figure}

Figures \ref{unsteady_Y_time_re_50} and \ref{unsteady_U_time_re_50} show the first two moments of the velocity and control for the final time.
For comparison, the uncontrolled flow is shown in Figure \ref{uncontrolled_Y_re_50}.
We can notice that the uncontrolled flow is more aligned to the top of the domain, while developing an eddy below the step.
The controlled minimal vorticity flow reflects the regime of a larger viscosity.
Moreover, the reattachment point shrinks significantly compared to that in Figure \ref{uncontrolled_Y_re_50}.

\begin{figure}[h!]
\centering
\caption{Plots of the mean (top) and variance (bottom) of the stream function for an unsteady distributed control flow with $\nu = 1/50$ at $t=10$}
\includegraphics[scale=0.6]{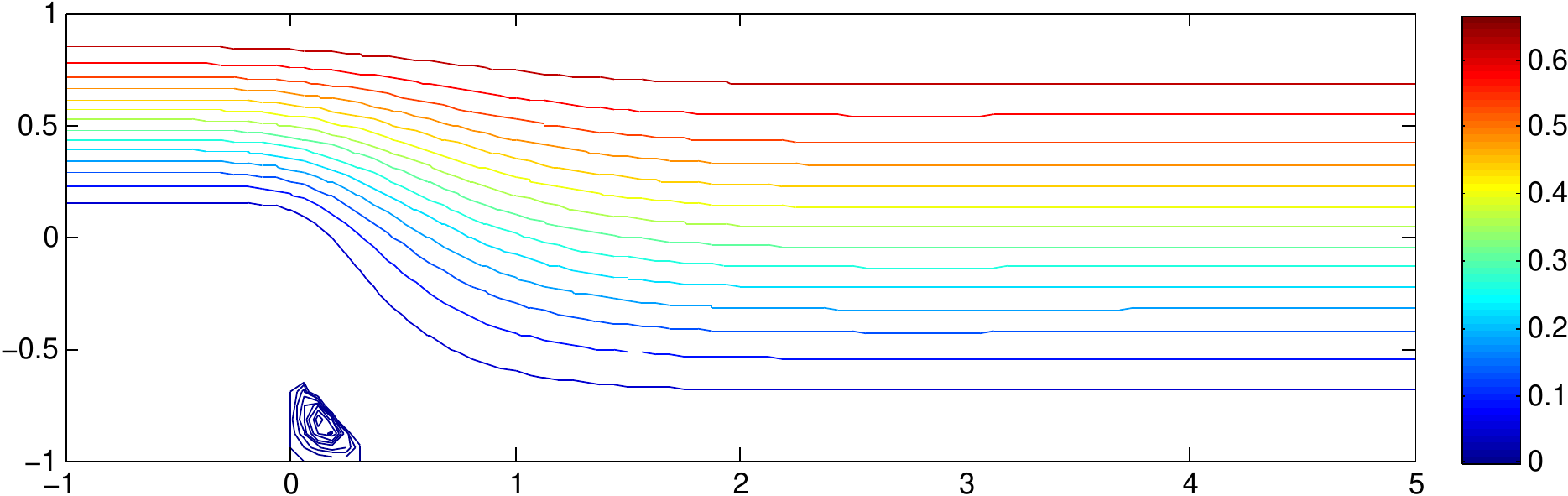}\\
\includegraphics[scale=0.6]{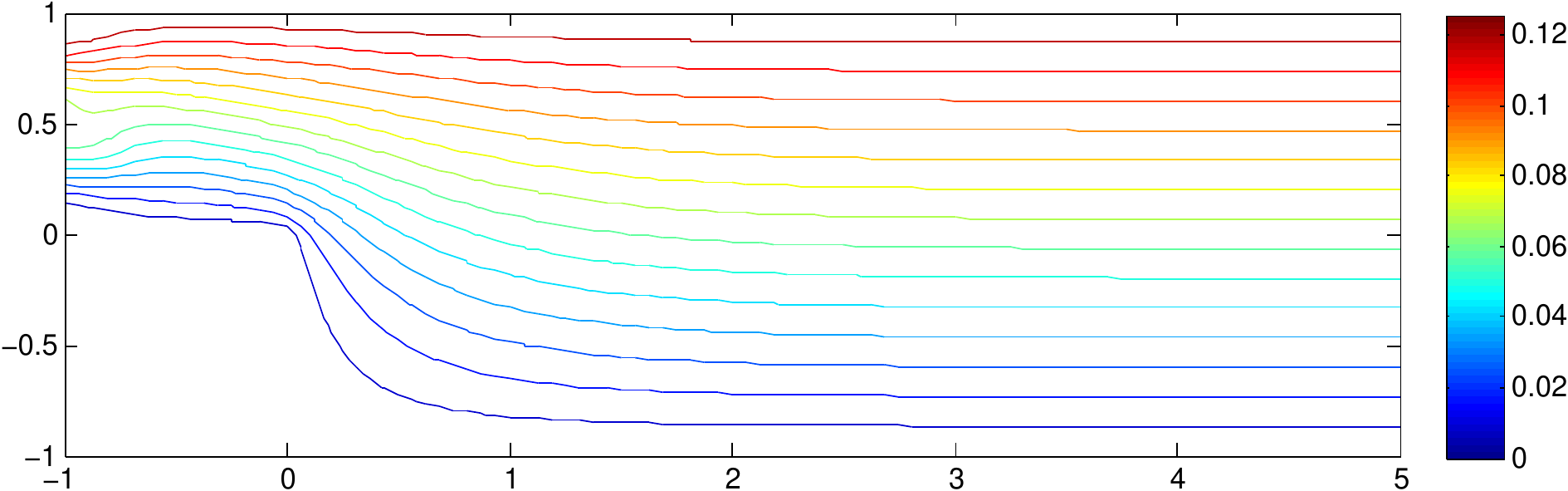}
\label{unsteady_Y_time_re_50}
 \vspace{-1.5mm}
\end{figure}

\begin{figure}[h!]
\centering
\caption{Plots of the mean (top) and variance (bottom) of the control for an unsteady distributed control flow with $\nu = 1/50,\; n_t=1024.$}
 \includegraphics[scale=0.8]{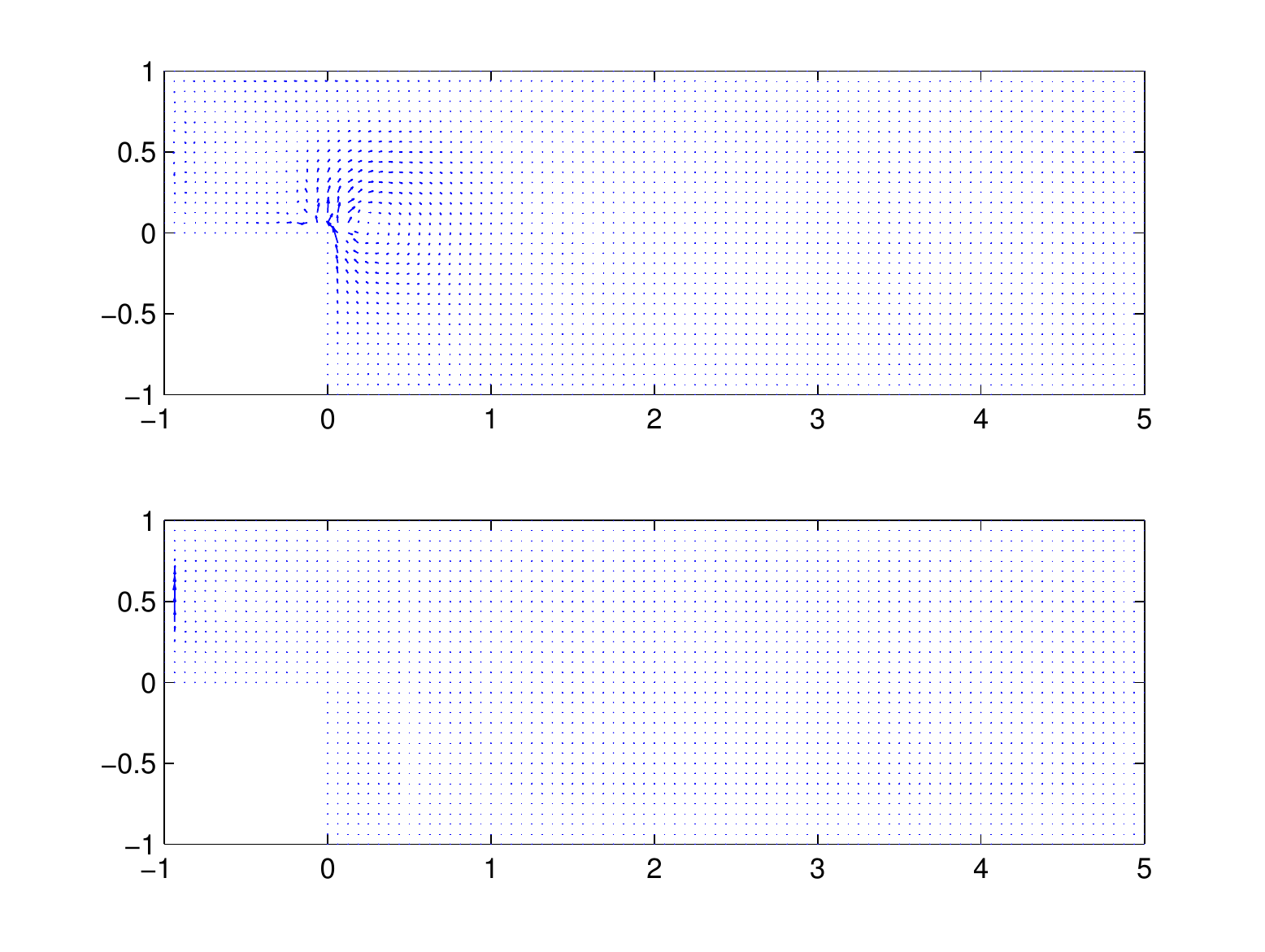}
\label{unsteady_U_time_re_50}
 \vspace{-1.5mm}
\end{figure}


\begin{figure}[h!]
\centering
\caption{Stream function for an unsteady uncontrolled flow at $t=10$.}
\includegraphics[scale=0.6]{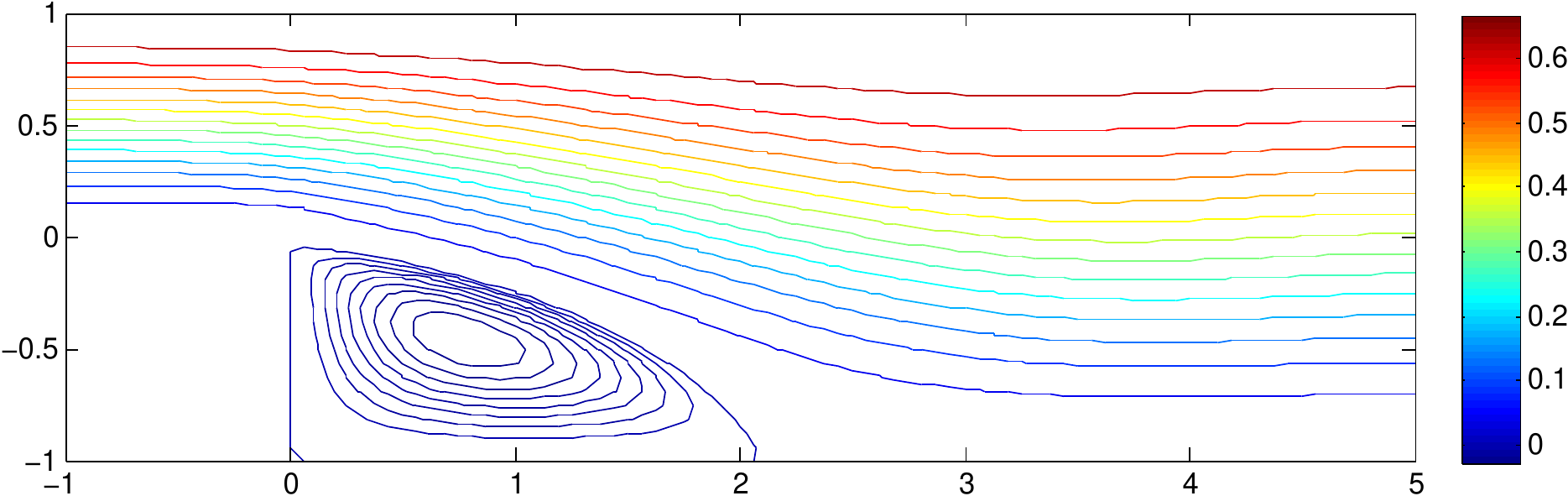}
\label{uncontrolled_Y_re_50}
 \vspace{-1.5mm}
\end{figure}


\subsection{Time-dependent boundary control problem}
Now we switch to the boundary control, applied as a Neumann boundary condition on the step wall \eqref{kkt_cond1}.
The problem is more difficult since the partial control cannot fully steer the flow to the Stokes regime.
Strongly nonlinear effects that remain in the flow inflate the TT ranks and slow down the convergence.

In particular, in Fig. \ref{fig:nu_bnd} we show how the scheme behaves for different viscosities.
We see that, compared to the distributed control case, even the case $\nu=1/50$ is difficult to solve due to very large TT ranks.
On the other hand, changing the regularization parameter (Fig. \ref{fig:beta_bnd}) does not influence the cost and storage complexities as much as in the distributed control case.
In fact, taking small $\beta$ in the partial control case leads to a poor performance of the Schur complement preconditioner,
and hence a large CPU time due to a large number of local GMRES iterations.
Nevertheless, smaller vorticity norm and number of Picard iterations for $\beta<1$ indicates that the target functional is optimized as expected.

%
%

\begin{figure}[h!]
\caption{CPU time and the number of Picard iterations (left) and TT rank and memory reduction ratio (right) for different viscosities with the boundary control.}
\label{fig:nu_bnd}
\resizebox{0.98\linewidth}{!}{
\begin{tikzpicture}
  \begin{axis}[%
  xmode=normal,
  ymode=normal,
  xmin=10,xmax=50,
  xlabel={$1/\nu$},
  ytick={00,10,20,30,40,50,60},
  yticklabels={0,10000,20000,30000,40000,50000,60000},
  ymin=0,ymax=65,
  y filter/.code={\pgfmathparse{0.001*\pgfmathresult}\pgfmathresult},
  legend style={at={(0.01,0.99)},anchor=north west},
  ylabel=CPU Time, axis y line*=left, y label style={at={(-0.01,1.0)},anchor=south west,rotate=-90,color=black},
  cycle list name=stokescontrol,
  ]
  \addplot+[] coordinates {
   (10,      4832.0  )
   (20,     10075.0  )
   (40,     37246.0  )
   (50,     62334.0  )}; \addlegendentry{CPU time};
   \addplot+[] coordinates{(0,0)}; \addlegendentry{Picard iterations};
  \end{axis}
  \begin{axis}[%
  xmode=normal,
  ymode=normal,
  legend style={at={(0.01,0.01)},anchor=south west},
  ylabel=Picard iterations, axis y line*=right, y label style={at={(1.0,1.0)},anchor=south east,rotate=-90,color=blue},every y tick label/.style={blue},
  axis x line=none,
  cycle list name=stokescontrol
  ]
  \pgfplotsset{cycle list shift=1};
  \addplot+[] coordinates {
   (10,     5 )
   (20,     7 )
   (40,     10)
   (50,     11)};
  \end{axis}
\end{tikzpicture}
\begin{tikzpicture}
  \begin{axis}[%
  xmode=normal,
  ymode=normal,
  xmin=10,xmax=50,
  legend style={at={(0.99,0.01)},anchor=south east},
  ylabel=max. TT rank, axis y line*=left, y label style={at={(-0.01,1.0)},anchor=south west,rotate=-90},
  xlabel={$1/\nu$},
  cycle list name=stokescontrol
  ]
  \addplot+[] coordinates {
   (10,     86 )
   (20,     116)
   (40,     161)
   (50,     178)}; \addlegendentry{max. TT rank};
   \pgfplotsset{cycle list shift=1};
   \addplot+[] coordinates{(0,0)}; \addlegendentry{$\log_{10}\mbox{MemR}$};
  \end{axis}
  \begin{axis}[%
  xmode=normal,
  ymode=normal,
  legend style={at={(0.01,0.01)},anchor=south west},
  ylabel=$\log_{10}\mbox{MemR}$, axis y line*=right, y label style={at={(1.0,1.0)},anchor=south east,rotate=-90,color=red},every y tick label/.style={red},
  axis x line=none,
  cycle list name=stokescontrol
  ]
  \pgfplotsset{cycle list shift=2};
  \addplot+[] coordinates {
   (10,      -3.8539)
   (20,      -3.2366)
   (40,      -3.0362)
   (50,      -2.9586)};
  \end{axis}
\end{tikzpicture}
}
\end{figure}
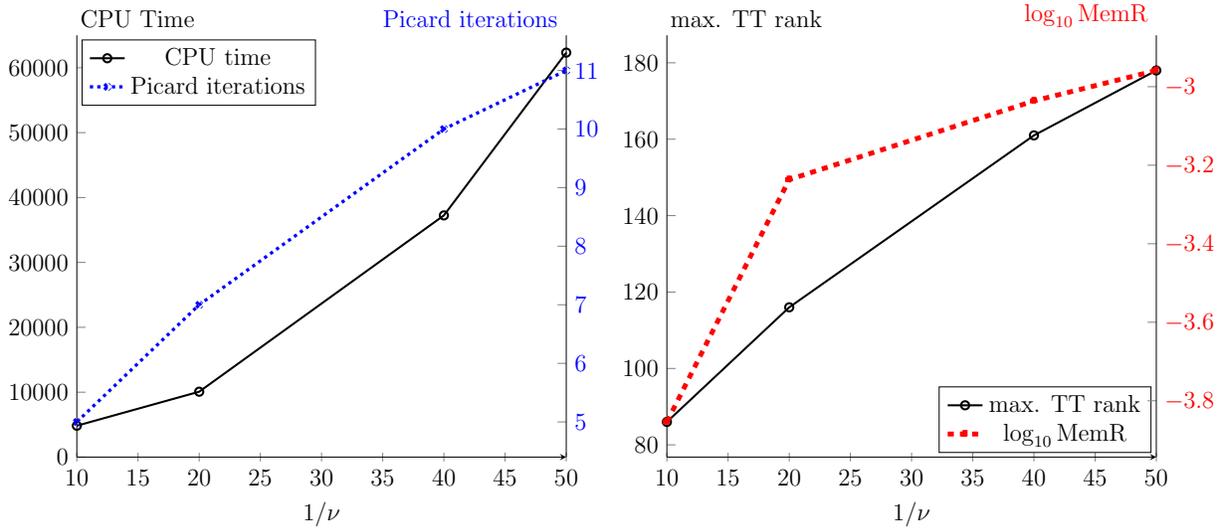

%
%

\begin{figure}[h!]
\caption{CPU time and the number of Picard iterations (left) and TT rank and vorticity (right) for different regularization parameters of the boundary control.}
\label{fig:beta_bnd}
\resizebox{0.98\linewidth}{!}{
\begin{tikzpicture}
  \begin{axis}[%
  xmode=normal,
  ymode=normal,
  xmin=-3,xmax=2,
  xlabel={$\log_{10}\beta$},
  ytick={10,12,14,16,18,20},
  yticklabels={10000,12000,14000,16000,18000,20000},
  ymin=9,ymax=21,
  y filter/.code={\pgfmathparse{0.001*\pgfmathresult}\pgfmathresult},
  legend style={at={(0.50,0.99)},anchor=north},
  ylabel=CPU Time, axis y line*=left, y label style={at={(-0.01,1.0)},anchor=south west,rotate=-90,color=black},
  cycle list name=stokescontrol,
  ]
  \addplot+[] coordinates {
   (-3,        19509.0 )
   (-2,        13342.0 )
   (-1,        09884.0 )
   (1 ,        10157.0 )
   (2 ,        11135.0 )}; \addlegendentry{CPU Time};
   \addplot+[] coordinates{(0,0)}; \addlegendentry{Picard iterations};
  \end{axis}
  \begin{axis}[%
  xmode=normal,
  ymode=normal,
  legend style={at={(0.01,0.01)},anchor=south west},
  ylabel=Picard iterations, axis y line*=right, y label style={at={(1.0,1.0)},anchor=south east,rotate=-90,color=blue},every y tick label/.style={blue},
  axis x line=none,
  cycle list name=stokescontrol
  ]
  \pgfplotsset{cycle list shift=1};
  \addplot+[] coordinates{
   (-3, 5)
   (-2, 5)
   (-1, 5)
   (1 , 6)
   (2 , 7)};
  \end{axis}
\end{tikzpicture}
\begin{tikzpicture}
  \begin{axis}[%
  xmode=normal,
  ymode=normal,
  xmin=-3,xmax=2,
  ymin=80,ymax=180,
  legend style={at={(0.50,0.99)},anchor=north},
  ylabel=max. TT rank, axis y line*=left, y label style={at={(-0.01,1.0)},anchor=south west,rotate=-90},
  xlabel={$\log_{10}\beta$},
  cycle list name=stokescontrol
  ]
  \addplot+[] coordinates{
   (-3,    155)
   (-2,    160)
   (-1,    154)
   (1 ,    161)
   (2 ,    160)}; \addlegendentry{max. TT rank};
   \pgfplotsset{cycle list shift=2};
   \addplot+[] coordinates{(0,0)}; \addlegendentry{$\|\nabla \times \vv\|^2$};
  \end{axis}
  \begin{axis}[%
  xmode=normal,
  ymode=normal,
  legend style={at={(0.01,0.01)},anchor=south west},
  ylabel=$\|\nabla \times \vv\|^2$, axis y line*=right, y label style={at={(1.0,1.0)},anchor=south east,rotate=-90,color=green!50!black},every y tick label/.style={green!50!black},
  axis x line=none,
  cycle list name=stokescontrol
  ]
  \pgfplotsset{cycle list shift=3};
  \addplot+[] coordinates{
   (-3, 11.1100 )
   (-2, 11.1100 )
   (-1, 11.1200 )
   (1 , 11.3300 )
   (2 , 11.4100 )};
  \end{axis}
\end{tikzpicture}
}
\end{figure}
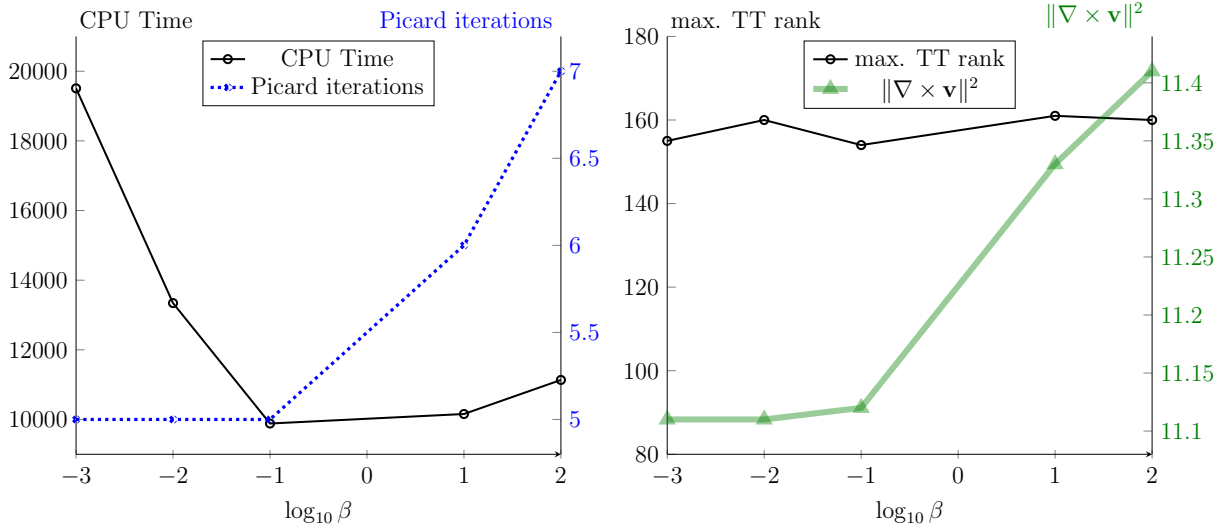

Concerning the stochastic parameters, we see qualitatively the same behaviour as
in the distributed control case: all complexity indicators increase for smaller $\gamma$ and larger $m$,
although the CPU time and TT ranks saturate with $m$, see Fig. \ref{fig:gamma_m_bnd}.

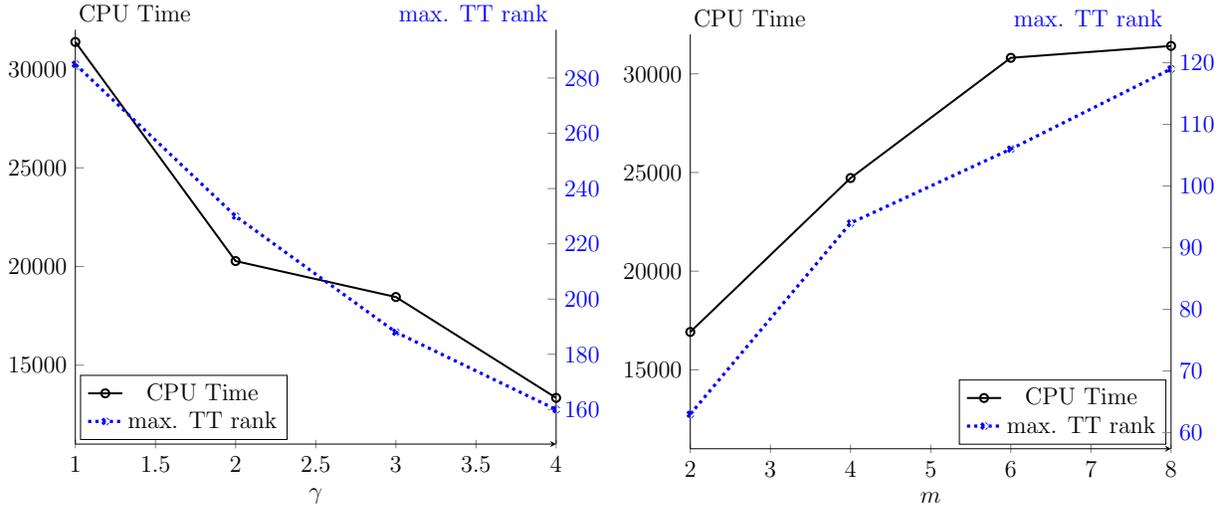
\begin{figure}[h!]
\caption{CPU time and the TT rank for different KLE decays (left) and KLE dimensions (right), boundary control.}
\label{fig:gamma_m_bnd}
\resizebox{0.98\linewidth}{!}{
\begin{tikzpicture}
  \begin{axis}[%
  xmode=normal,
  ymode=normal,
  xmin=1,xmax=4,
  xlabel={$\gamma$},
  ytick={15,20,25,30},
  yticklabels={15000,20000,25000,30000},
  ymin=11,ymax=32,
  y filter/.code={\pgfmathparse{0.001*\pgfmathresult}\pgfmathresult},
  legend style={at={(0.01,0.01)},anchor=south west},
  ylabel=CPU Time, axis y line*=left, y label style={at={(-0.01,1.0)},anchor=south west,rotate=-90,color=black},
  cycle list name=stokescontrol,
  ]
  \addplot+[] coordinates {
   (1, 31380.0)
   (2, 20274.0)
   (3, 18453.0)
   (4, 13342.0)}; \addlegendentry{CPU Time};
   \addplot+[] coordinates{(0,0)}; \addlegendentry{max. TT rank};
  \end{axis}
  \begin{axis}[%
  xmode=normal,
  ymode=normal,
  legend style={at={(0.01,0.01)},anchor=south west},
  ylabel=max. TT rank, axis y line*=right, y label style={at={(1.0,1.0)},anchor=south east,rotate=-90,color=blue},every y tick label/.style={blue},
  axis x line=none,
  cycle list name=stokescontrol
  ]
  \pgfplotsset{cycle list shift=1};
  \addplot+[] coordinates{
   (1, 285)
   (2, 230)
   (3, 188)
   (4, 160)};
  \end{axis}
\end{tikzpicture}
\begin{tikzpicture}
  \begin{axis}[%
  xmode=normal,
  ymode=normal,
  xmin=2,xmax=8,
  xlabel={$m$},
  ytick={15,20,25,30},
  yticklabels={15000,20000,25000,30000},
  ymin=11,ymax=32,
  y filter/.code={\pgfmathparse{0.001*\pgfmathresult}\pgfmathresult},
  legend style={at={(0.99,0.01)},anchor=south east},
  ylabel=CPU Time, axis y line*=left, y label style={at={(-0.01,1.0)},anchor=south west,rotate=-90,color=black},
  cycle list name=stokescontrol,
  ]
  \addplot+[] coordinates {
   (2, 16918.0)
   (4, 24720.0)
   (6, 30813.0)
   (8, 31414.0)}; \addlegendentry{CPU Time};
   \addplot+[] coordinates{(0,0)}; \addlegendentry{max. TT rank};
  \end{axis}
  \begin{axis}[%
  xmode=normal,
  ymode=normal,
  legend style={at={(0.01,0.01)},anchor=south west},
  ylabel=max. TT rank, axis y line*=right, y label style={at={(1.0,1.0)},anchor=south east,rotate=-90,color=blue},every y tick label/.style={blue},
  axis x line=none,
  cycle list name=stokescontrol
  ]
  \pgfplotsset{cycle list shift=1};
  \addplot+[] coordinates{
   (2, 63 )
   (4, 94 )
   (6, 106)
   (8, 119)};
  \end{axis}
\end{tikzpicture}
}
\end{figure}

\subsection{Stationary boundary control problem}
One of the sources of the rank inflation in the previous section is the simultaneous storage of
all time snapshots in a single TT representation.
In the next tests we consider the stationary Navier-Stokes equations as constraints.
In Fig. \ref{fig:nu_stat}, we vary the viscosity, and in Fig. \ref{fig:beta_stat} we investigate the influence of the regularization parameter.
We see that the results reflect qualitatively the behaviour of the time-dependent problem with the
boundary control, i.e. the stochastic components are also strongly coupled due to the nonlinearity, 
and it needs more Picard iterations and larger TT ranks for the solution to converge in a low viscosity regime.
Nonetheless, the TT ranks (and hence the computational times) are much smaller than in the time-dependent case.

%


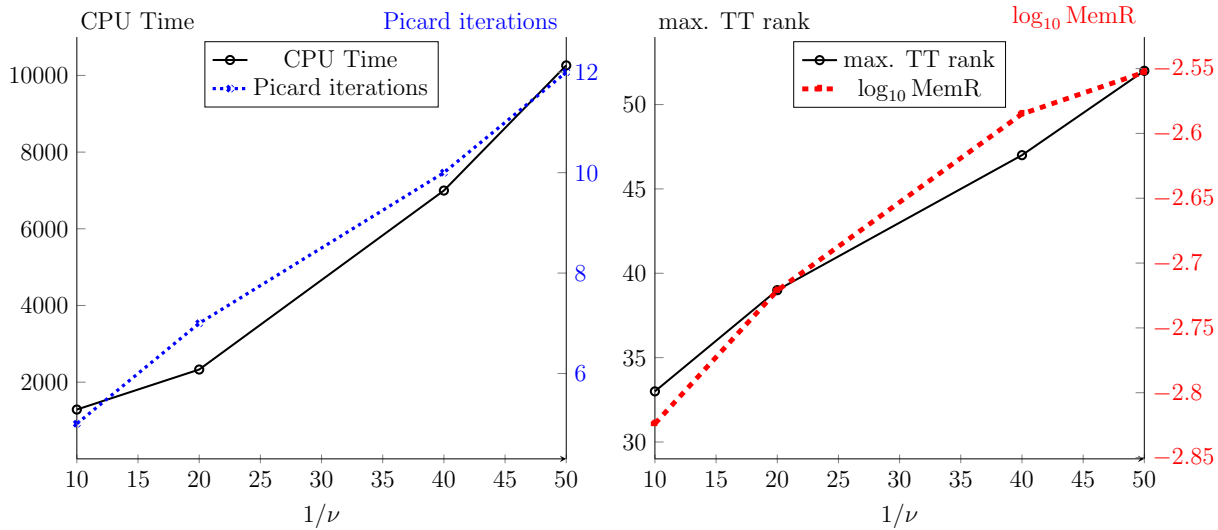
\begin{figure}[h!]
\caption{CPU time and number of Picard iterations (left) and TT rank and memory reduction ratio (right) for different viscosities, boundary control, stationary problem.}
\label{fig:nu_stat}
\resizebox{0.98\linewidth}{!}{
\begin{tikzpicture}
  \begin{axis}[%
  xmode=normal,
  ymode=normal,
  xmin=10,xmax=50,
  xlabel={$1/\nu$},
  ytick={2,4,6,8,10},
  yticklabels={2000,4000,6000,8000,10000},
  ymin=0,ymax=11,
  y filter/.code={\pgfmathparse{0.001*\pgfmathresult}\pgfmathresult},
  legend style={at={(0.50,0.99)},anchor=north},
  ylabel=CPU Time, axis y line*=left, y label style={at={(-0.01,1.0)},anchor=south west,rotate=-90,color=black},
  cycle list name=stokescontrol,
  ]
  \addplot+[] coordinates {
   (10,    1286.10)
   (20,    2329.90)
   (40,    6997.20 )
   (50,   10261.00 )}; \addlegendentry{CPU Time};
  \addplot+[] coordinates{(0,0)}; \addlegendentry{Picard iterations};
  \end{axis}
  \begin{axis}[%
  xmode=normal,
  ymode=normal,
  legend style={at={(0.01,0.01)},anchor=south west},
  ylabel=Picard iterations, axis y line*=right, y label style={at={(1.0,1.0)},anchor=south east,rotate=-90,color=blue},every y tick label/.style={blue},
  axis x line=none,
  cycle list name=stokescontrol
  ]
  \pgfplotsset{cycle list shift=1};
  \addplot+[] coordinates {
   (10,      5)
   (20,      7)
   (40,     10)
   (50,     12)};
  \end{axis}
\end{tikzpicture}
\begin{tikzpicture}
  \begin{axis}[%
  xmode=normal,
  ymode=normal,
  xmin=10,xmax=50,
  ymin=29,ymax=54,
  legend style={at={(0.50,0.99)},anchor=north},
  ylabel=max. TT rank, axis y line*=left, y label style={at={(-0.01,1.0)},anchor=south west,rotate=-90},
  xlabel={$1/\nu$},
  cycle list name=stokescontrol
  ]
  \addplot+[] coordinates {
   (10,     33)
   (20,     39)
   (40,     47)
   (50,     52)}; \addlegendentry{max. TT rank};
  \pgfplotsset{cycle list shift=1};
  \addplot+[] coordinates{(0,0)}; \addlegendentry{$\log_{10}\mbox{MemR}$};
  \end{axis}
  \begin{axis}[%
  xmode=normal,
  ymode=normal,
  legend style={at={(0.01,0.01)},anchor=south west},
  ylabel=$\log_{10}\mbox{MemR}$, axis y line*=right, y label style={at={(1.0,1.0)},anchor=south east,rotate=-90,color=red},every y tick label/.style={red},
  axis x line=none,
  cycle list name=stokescontrol
  ]
  \pgfplotsset{cycle list shift=2};
  \addplot+[] coordinates {
   (10,   -2.8239)
   (20,   -2.7212)
   (40,   -2.5850)
   (50,   -2.5528)};
  \end{axis}
\end{tikzpicture}
}
\end{figure}
%
%
%

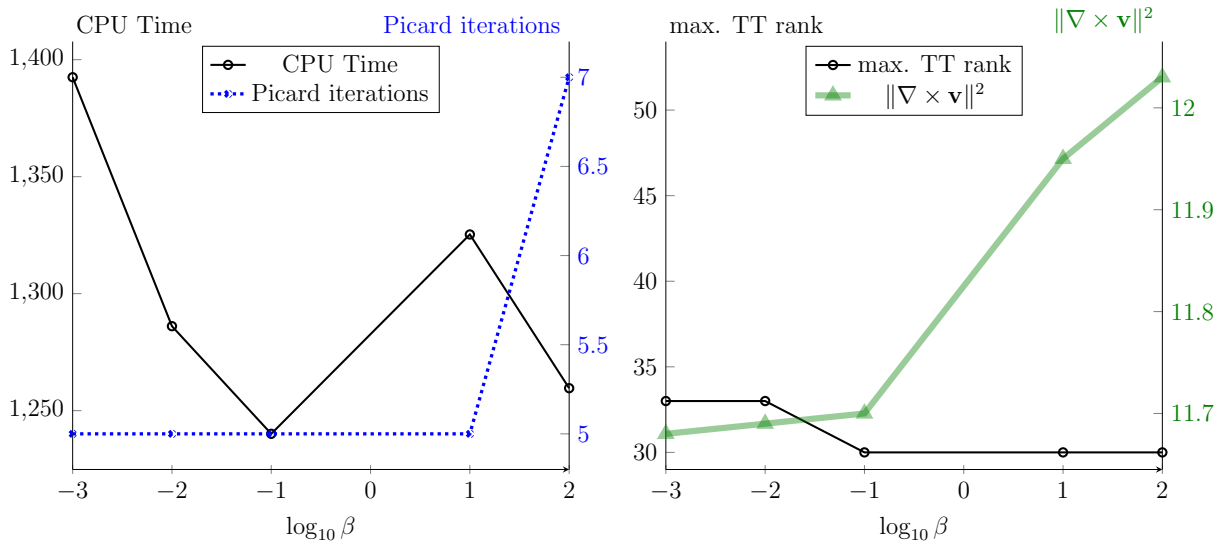
\begin{figure}[h!]
\caption{CPU time and number of Picard iterations (left) and TT rank and vorticity (right) for different regularization parameters, boundary control, stationary problem.}
\label{fig:beta_stat}
\resizebox{0.98\linewidth}{!}{
\begin{tikzpicture}
  \begin{axis}[%
  xmode=normal,
  ymode=normal,
  xmin=-3,xmax=2,
  xlabel={$\log_{10}\beta$},
  legend style={at={(0.50,0.99)},anchor=north},
  ylabel=CPU Time, axis y line*=left, y label style={at={(-0.01,1.0)},anchor=south west,rotate=-90,color=black},
  cycle list name=stokescontrol,
  ]
  \addplot+[] coordinates {
   (-3,    1392.5 )
   (-2,    1286.1 )
   (-1,    1240.1 )
   (1 ,    1325.3 )
   (2 ,    1259.6 )}; \addlegendentry{CPU Time};
  \addplot+[] coordinates{(5,0)}; \addlegendentry{Picard iterations};
  \end{axis}
  \begin{axis}[%
  xmode=normal,
  ymode=normal,
  legend style={at={(0.01,0.01)},anchor=south west},
  ylabel=Picard iterations, axis y line*=right, y label style={at={(1.0,1.0)},anchor=south east,rotate=-90,color=blue},every y tick label/.style={blue},
  axis x line=none,
  cycle list name=stokescontrol
  ]
  \pgfplotsset{cycle list shift=1};
  \addplot+[] coordinates{
   (-3, 5)
   (-2, 5)
   (-1, 5)
   (1 , 5)
   (2 , 7)};
  \end{axis}
\end{tikzpicture}
\begin{tikzpicture}
  \begin{axis}[%
  xmode=normal,
  ymode=normal,
  xmin=-3,xmax=2,
  ymin=29,ymax=54,
  legend style={at={(0.50,0.99)},anchor=north},
  ylabel=max. TT rank, axis y line*=left, y label style={at={(-0.01,1.0)},anchor=south west,rotate=-90},
  xlabel={$\log_{10}\beta$},
  cycle list name=stokescontrol
  ]
  \addplot+[] coordinates{
   (-3,    33)
   (-2,    33)
   (-1,    30)
   (1 ,    30)
   (2 ,    30)}; \addlegendentry{max. TT rank};
  \pgfplotsset{cycle list shift=2};
  \addplot+[] coordinates{(0,0)}; \addlegendentry{$\|\nabla \times \vv\|^2$};
  \end{axis}
  \begin{axis}[%
  xmode=normal,
  ymode=normal,
  legend style={at={(0.01,0.01)},anchor=south west},
  ylabel=$\|\nabla \times \vv\|^2$, axis y line*=right, y label style={at={(1.0,1.0)},anchor=south east,rotate=-90,color=green!50!black},every y tick label/.style={green!50!black},
  axis x line=none,
  cycle list name=stokescontrol
  ]
  \pgfplotsset{cycle list shift=3};
  \addplot+[] coordinates{
   (-3,    11.6800)
   (-2,    11.6900)
   (-1,    11.7000)
   (1 ,    11.9500)
   (2 ,    12.0300)};
  \end{axis}
\end{tikzpicture}
}
\end{figure}

Finally,  Figures \ref{stationary_partial_Y_re_50} and \ref{stationary_partial_U_re_50},
depict the moments of the velocity and control of the stationary boundary controlled flow.
The boundary controlled flow still manifests an eddy around the corner.
Nonetheless, it corresponds to a smaller vorticity than in the uncontrolled
regime (Fig. \ref{uncontrolled_Y_re_50}), where there is a rather sharp 
interface between the main stream and the corner area.
We also remark that the number of Picard iterations remains generally below 10, with just a few exceptions.

\begin{figure}[h!]
\centering
\caption{Plots of the mean (top) and variance (bottom) of the stream function for a stationary boundary control   flow with $\nu = 1/50.$}
\includegraphics[scale=0.6]{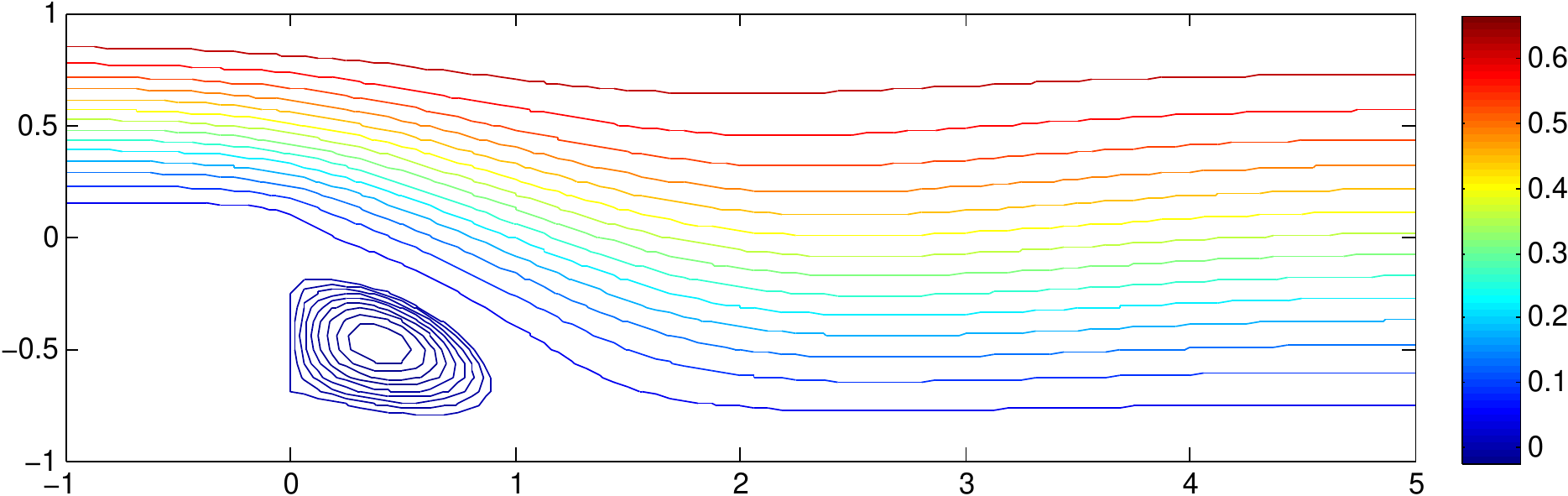} \\
\includegraphics[scale=0.6]{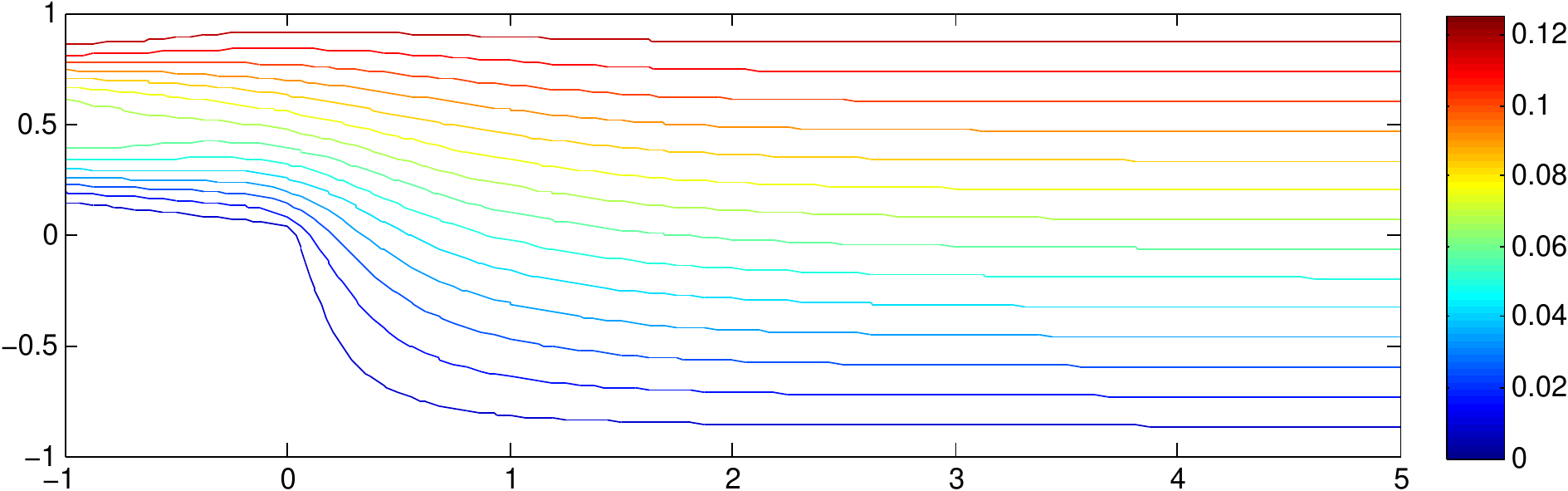}
\label{stationary_partial_Y_re_50}
 \vspace{-1.5mm}
\end{figure}

\begin{figure}[h!]
\centering
\caption{Plots of the mean (top) and variance (bottom) of control for a  stationary boundary control  flow with $\nu = 1/50.$}
 \includegraphics[scale=0.8]{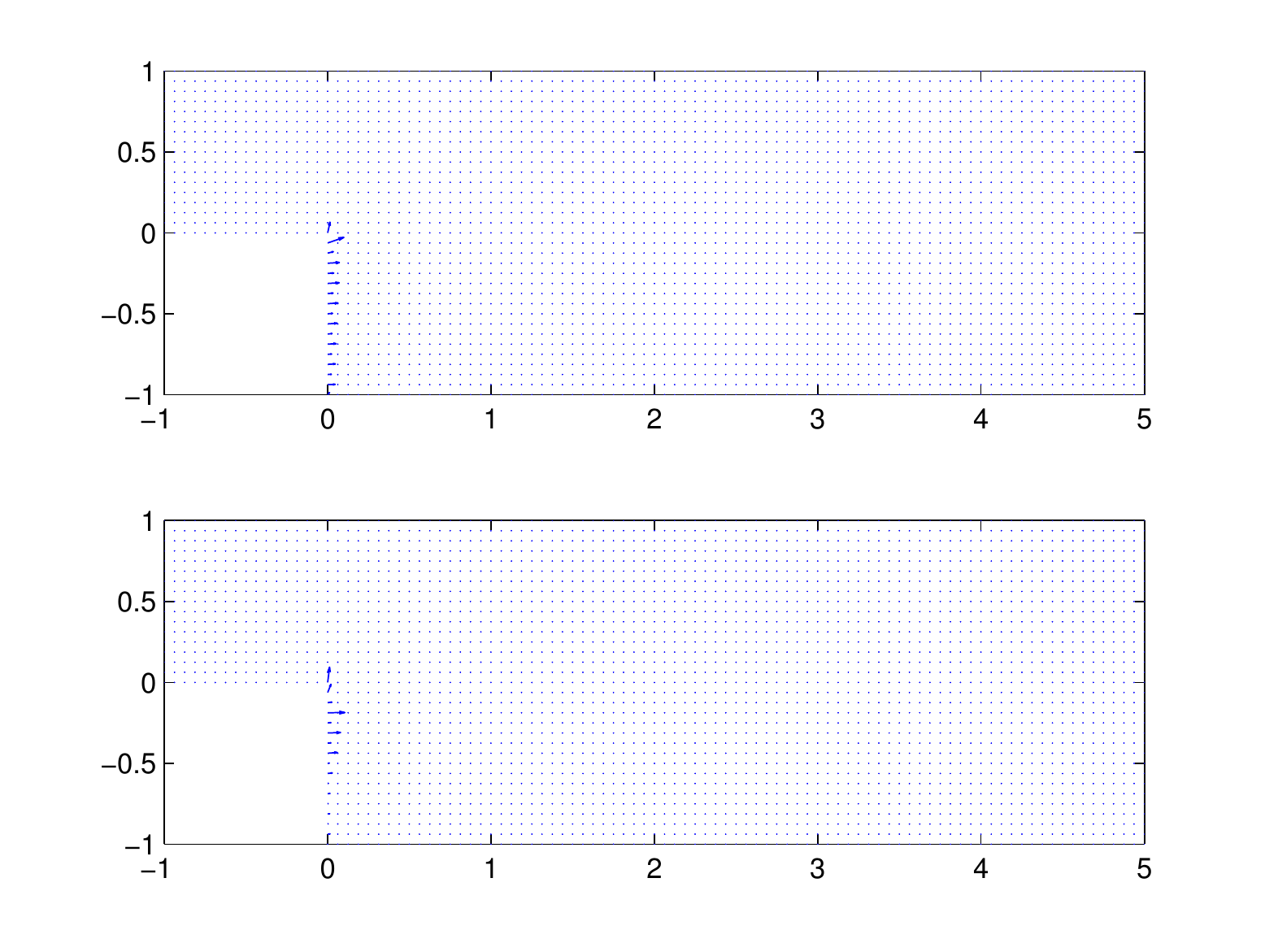}
\label{stationary_partial_U_re_50}
 \vspace{-1.5mm}
\end{figure}

\section{Conclusions and outlook}
\label{con_out}
We demonstrated the applicability of low-rank tensor decompositions to the
solution of optimal control problems constrained by unsteady Navier-Stokes equations with stochastic inputs.
This problem has a threefold challenge: a nonlinear time-dependent PDE, an optimization problem using a Lagrangian approach, and random inputs.
In a classical stochastic Galerkin discretization one needs to multiply the numbers of degrees of freedom coming from space, time and stochastic quantities.
In particular, the outer optimization implies storing all time snapshots, and random quantities are introduced as independent variables.

We never store or compute all elements of such a solution as this would quickly become infeasible even on large computers.
Instead we compress them into the Tensor Train representation.
A crucial part of the scheme is an alternating iterative solver,
which computes the TT factors directly.
Although  known in the multilinear algebra community, this idea required substantial modifications in order to be applicable to the OPNS.
We preserve the saddle-point structure in the reduced model
and accommodate components of different sizes, such as the boundary control.
Moreover, given a low-rank representation of the previous solution, we assemble the linearized operator also in a low-rank form.
The scheme provides a significant reduction of complexity, up to 9 orders of magnitude for the largest number of random variables.


\bibliographystyle{siam}
\bibliography{refs}
\end{document}